\theoremstyle{plain}
\newtheorem{thm}{Theorem}[section]
\newtheorem{lem}[thm]{Lemma}
\newtheorem{cor}[thm]{Corollary}
\newtheorem*{claim*}{Claim}
\theoremstyle{remark}
\newtheorem{rem}[thm]{Remark}
\newtheorem{dfn}[thm]{Definition}
\newtheorem*{acknowledgements}{Acknowledgements}
\numberwithin{equation}{section}
\numberwithin{table}{section}
\newcommand{\N}{\mathbb{N}}
\newcommand{\Z}{\mathbb{Z}}
\newcommand{\Q}{\mathbb{Q}}
\newcommand{\tensor}{\otimes}
\renewcommand{\epsilon}{\varepsilon}
\renewcommand{\phi}{\varphi}
\renewcommand{\theta}{\vartheta}
\newcommand{\ideal}{\triangleleft}
\newcommand{\Zp}{\mathbb{Z}_{p}}
\def \mcL {\ensuremath{\mathcal{L}}}
\def \Fp {\ensuremath{\mathbb{F}_p}}
\def \Zp  {\mathbb{Z}_p}
\author{Seungjai Lee} \address{Fakult\"at f\"ur Mathematik,
	Universit\"at Bielefeld, D-33501 Bielefeld, Germany}
\email{seungjai.lee@math.uni-bielefeld.de}
\keywords{Zeta functions of groups and rings, normal subgroup growth, counting points on varieties, Higman's PORC conjecture}
\subjclass[2020]{20F18, 20E07, 20D15, 11M41, 14M12}
\begin{document}
	
	\title[$\zeta_{G}^{\ideal}(s)$ for small $\mathfrak{T}_2$-groups and their behaviour on Residue Classes]{Normal Zeta functions of small $\mathfrak{T}_2$-groups and their behaviour on Residue Classes}

	\date{\today}
	\begin{abstract} Let $G$ be a finitely generate nilpotent class-2 torsion-free group. We study how the zeta function enumerating normal subgroups of G varies on residue classes. In particular, we show that for small such $G$ of Hirsch length less than or equal to 7, the normal zeta functions are generically always rational functions on residue classes. We then show that there are examples of groups with Hirsch length 8 whose normal zeta function is not a rational function on residue classes. We observe the connection to Higman's PORC conjecture.
	\end{abstract}
	
	\maketitle
	\setcounter{tocdepth}{1} \tableofcontents{}
	
	\thispagestyle{empty}

\section{Introduction}
\subsection{Backgrounds and motivations}
Let $G$ be a finitely generated group. In their seminal paper \cite{GSS/88}, Grunewald, Segal, and Smith defined a \textit{zeta function of a group $G$} by associating a Dirichlet series
\[\zeta_{G}^{*}(s)=\sum_{H*G}|G:H|^{-s}=\sum_{m=1}^{\infty}a_{m}^{*}(G)m^{-s},\]
where $*\in\{\leq,\triangleleft\}$ and \[a_{m}^{*}(G)=|\{H*G\mid|G:H|=m\}|.\] We call $\zeta_{G}^{\leq}(s)$ the \textit{subgroup zeta function of $G$} and $\zeta_{G}^{\triangleleft}(s)$ the \textit{normal subgroup zeta function of $G$}. 

When $G$ is a torsion-free finitely generated nilpotent group (a $\mathfrak{T}$-group for short), the nilpotency of $G$ lends itself to a natural Euler product $\zeta_{G}^{*}(s)=\prod_{p\,\textrm{prime}}\zeta_{G,p}^{*}(s)$,
where 
\[\zeta_{G,p}^{*}(s)=\sum_{i=0}^{\infty}a_{p^{i}}^{*}(G)p^{-is}.\]
We call $\zeta_{G,p}^{\leq}(s)$ the \textit{local subgroup zeta function of $G$} and $\zeta_{G,p}^{\triangleleft}(s)$ the \textit{local normal subgroup zeta function of $G$}. \cite[Theorem 3.5]{GSS/88} proved that the local zeta function $\zeta_{G,p}^{*}(s)$ is a rational function in $p^{-s}$. 

\subsubsection{Local zeta functions and the $\Fp$ points of algebraic varieties}

 One of the main questions in the study of zeta functions of groups is how $\zeta_{G,p}^{*}(s)$ vary with the prime $p$. du Sautoy and Grunewald proved \cite[Theorem 1.6]{duSG/00} that for $*\in\{\leq,\,\vartriangleleft\},$ there exist finitely many varieties $U_{1}^{*},\ldots,U_{h}^{*}$ defined over $\mathbb{Q},$  and rational functions $W_{1}^{*}(X,Y),\ldots,W_{h}^{*}(X,Y)\in\mathbb{Q}(X,Y)$ such that, for almost all primes $p$, 
 \begin{equation*}
 \zeta_{G,p}^{*}(s)=\sum_{i=1}^{h}|\overline{U_{i}^{*}}(\Fp)|W_{i}^{*}(p,t).
 \end{equation*} 

This theorem connects the structure of $\zeta_{G,p}^{*}(s)$ over different primes to the problem of counting $\Fp$ points of certain algebraic varieties, which is in general very wild and not polynomials in $p$. For example, let $E$ denote the elliptic curve $y^2=x^3-x$. du Sautoy showed in \cite{duS-ecI/01} that there exists a class-2 $\mathfrak{T}$-group $G_{E}$ with Hirsch length $h(G_{E})=9$ such that
\[\zeta_{G_E,p}^{*}(s)=W_{1}^{*}(p,t)+|E(\Fp)|W_{2}^{*}(p,t),\]
where $W_{1}^{*}(X,Y),W_{2}^{*}(X,Y)\in\Q(X,Y)$ and $|E(\Fp)|$ denotes the number of $\Fp$ points on the elliptic curve $E$. 

Many studies have been made to describe and investigate which varieties can appear in the decomposition (\cite{duS-ennui/02,Voll/05,duSWoodward/08,Voll/11} provides good surveys of this topic). 

\subsubsection{Connection to Higman's PORC conjecture}

Let $f_{n}(p)$ denote the number of $p$-groups of order $p^{n}$. Higman's PORC conjecture predicts that $f_{n}(p)$ is a PORC (Polynomial On Residue Classes) function: for a fixed $n$, there exists a fixed integer $N$ and finitely
many polynomials $g_{i}(x)\;(i=1,2,\ldots,N)$ such that if
$p\equiv i\bmod N$ then 
\[
f_{n}(p)=g_{i}(p).
\]
This conjecture is only proven to be true for $n\leq7$, and still open for all other $n$.

In \cite{duS/02}, du Sautoy discovered that the study of behaviours of local normal subgroup zeta functions of $\mathfrak{T}$-groups when $p$ varies, the \textit{uniformity problem}, is closely related to Higman's PORC conjecture:
\begin{dfn}
	The zeta functions $\zeta_{G}^{*}(s)$ are said to be \textit{finitely uniform} if there exist rational functions $W_{1}^{*}(X,Y),\ldots,W_{k}^{*}(X,Y)\in\Q(X,Y)$ for $k\in\mathbb{N}$ such that, for every prime $p$, 
	\[\zeta_{G,p}^{*}(s)=W_{i}^{*}(p,t)\]
	for some $i\in[k]$. It is said to be \textit{uniform} if $k=1$, and \textit{non-uniform} if it is not finitely uniform.  
\end{dfn}
 With this observation du Sautoy and Vaughan-Lee \cite{DuSVL/2012} proved that there
exists a parametrised family of groups, one for each prime $p> 3$,
of order $p^{9}$ whose number
of immediate descendants of order $p^{10}$ is not PORC.

Motivated by these results the author constructed another parametrised family $G_{p}$ of order $p^8$ and of exponent $p$ in \cite{Lee/2016} with the following presentation for $p>3$ whose number ot immediate descendants of order $p^{9}$ is not PORC:
\begin{thm}[\cite{Lee/2016}, Theorem 1.3] \label{thm:G_p} Let
\begin{align*}
G_{p}=\left\langle \begin{aligned}x_{1},\,x_{2},\,x_{3},\,x_{4},\,x_{5},\,x_{6},\,x_{7},\,x_{8}\mid[x_{1},\,x_{4}]=[x_{2},\,x_{5}]=x_{6},\\
[x_{1},\,x_{5}]^{2}=[x_{3},\,x_{4}]=x^{2}_{7},\,[x_{2},\,x_{4}]=[x_{3},\,x_{5}]=x_{8}
\end{aligned}
\right\rangle 
\end{align*}
where all other commutators are defined to be 1 and $g^p=1$ for all $g\in G_{p}$.  The number of immediate descendants of $G_{p}$ of order $p^{9}$ is not PORC.
\end{thm}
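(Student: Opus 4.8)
The plan is to run the $p$-group generation algorithm of Newman and O'Brien. For a finite $p$-group $G$ of $p$-class $c$, with $p$-covering group $G^{*}$, $p$-multiplicator $M=M(G)$ and nucleus $N=N(G)$, the immediate descendants of $G$ of order $p\cdot|G|$ are, up to isomorphism, in bijection with the orbits of $\Aut(G)$ --- acting through its well-defined induced action on $M$ --- on the set of hyperplanes $M'<M$ that supplement $N$, i.e.\ with $N\nsubseteq M'$; dually these correspond to the $\F_p$-lines $\langle\lambda\rangle\subseteq M^{*}$ with $\lambda|_{N}\neq 0$. So the first step is to make $G_p^{*}$, $M$ and $N$ explicit. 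Since $G_p$ has $p$-class $2$, exponent $p$, and minimal generating rank $d=5$ (the generators $x_6,x_7,x_8$ lying in $\gamma_2(G_p)$), these objects are combinatorially transparent: $M$ is assembled from the degree-$2$ and degree-$3$ components of the free class-$3$ Lie algebra on $x_1,\dots,x_5$ together with the $p$-th-power generators, factored by the relations read off from the presentation, and $N$ is the image of the degree-$3$ component together with the $p$-th powers.

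Secondly one must make the induced action of $\Aut(G_p)$ on $M$ explicit. As for any class-$2$ group this action is built from that of the compatible pairs $A\le\GL(V)\times\GL(W)$ --- where $V:=G_p/\gamma_2(G_p)\cong\F_p^{5}$, $W:=\gamma_2(G_p)\cong\F_p^{3}$, and ``compatible'' means respecting the alternating commutator pairing $\beta\colon\Lambda^{2}V\to W$ read off from the relations --- together with a unipotent part coming from the central automorphisms. Identifying $A$ amounts to computing the stabiliser of the net of alternating forms on $V$ determined by $\beta$; this is a concrete, if lengthy, piece of linear algebra in which the exact shape of the relations is essential: the coincidences $[x_1,x_4]=[x_2,x_5]$, $[x_2,x_4]=[x_3,x_5]$, and, above all, the coefficient $2$ in $[x_1,x_5]^{2}=[x_3,x_4]$, which breaks the symmetry of the net and is precisely what prevents $G_p$ from being uniform.

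Thirdly one counts the $\Aut(G_p)$-orbits on $\{\langle\lambda\rangle\subseteq M^{*}:\lambda|_{N}\neq 0\}$. Stratifying this set by the rank or type of the functional --- equivalently by the isomorphism type of the descendant it produces --- and applying Burnside's lemma, every ``tame'' stratum contributes a number of orbits that is polynomial, or at worst PORC, in $p$; but on one exceptional stratum the orbit count is governed by the number of $\F_p$-rational points on an explicit variety that is not polynomial on residue classes, for essentially the reason du Sautoy's elliptic-curve example above is not. One then writes the total number of immediate descendants of order $p^{9}$ as $P(p)+c\cdot|E(\F_p)|$, with $P$ a PORC function, $c$ a nonzero rational, and $E/\Q$ an elliptic curve, and concludes using the well-known non-PORC-ness of $p\mapsto|E(\F_p)|$: were it a polynomial on some residue class, Hasse's bound would force $a_p(E)=p+1-|E(\F_p)|$ to be eventually constant on that class, which it is not.

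The main obstacle is the third step. One has to transport the $\Aut(G_p)$-action faithfully through the $p$-multiplicator, isolate the exceptional stratum, identify the curve it produces, and --- the real sticking point --- check that the elliptic-curve contribution genuinely survives the orbit-averaging, i.e.\ that the coefficient $c$ above is nonzero, rather than being absorbed into the PORC part. This is sensitive to pinning down the group $A$ (and the unipotent part) exactly: spurious automorphisms would amalgamate orbits and could wreck the non-uniformity, and it is here that the arithmetic attached to the coefficient $2$ does the decisive work.
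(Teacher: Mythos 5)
Note first that the paper does not prove this statement at all: it is imported verbatim from \cite{Lee/2016}, so the only basis for comparison is the mechanism that paper (and the present one, via the Remark after Theorem \ref{thm:gnp8}) actually uses. Your general framework is the right one --- the Newman--O'Brien $p$-group generation algorithm, with immediate descendants of order $p^{9}$ corresponding to $\Aut(G_p)$-orbits of allowable hyperplanes of the $p$-multiplicator supplementing the nucleus, and the automorphism action controlled by the pairs compatible with the commutator pairing, where the coefficient $2$ in $[x_1,x_5]^2=[x_3,x_4]$ is indeed the source of the arithmetic. This is essentially the strategy of \cite{DuSVL/2012} and \cite{Lee/2016}.

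However, your decisive step misidentifies the arithmetic object, and that is a genuine gap. The exceptional stratum is not governed by an elliptic curve: for this group the relevant count is the number of roots of $x^{3}-2$ modulo $p$ (equivalently the $\mathbb{F}_p$-points of the zero-dimensional variety $V=\{(1,k^2,k)\mid k^3=2\}$, the same variety that reappears in the degeneracy locus in Theorem \ref{thm:gnp8}). The descendant count has the shape ``PORC function plus a multiple of $n(p)$'' with $n(p)\in\{0,1,3\}$ as in Corollary \ref{cor:noofroot}, and non-PORC-ness follows from the classical fact (Cox, primes of the form $a^{2}+27b^{2}$; the splitting field of $x^3-2$ has nonabelian Galois group) that the splitting behaviour of $x^3-2$ is not determined by congruence conditions on $p$. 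Hasse's bound plays no role; and even taken on its own terms, your elliptic-curve endgame is incomplete, since you would still need to justify that $a_p(E)$ is not eventually constant on any arithmetic progression, which is true but not immediate from Hasse. Finally, you yourself flag that the core of the argument --- isolating the exceptional stratum, identifying the variety, and verifying that its contribution does not cancel in the orbit count --- is left undone; since that computation is precisely where the theorem lives, the proposal as written is a plan with the key arithmetic input pointed at the wrong object, not a proof.
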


\subsection{Main results and organisations}

The aim of this paper is to initiate the investigation on how $\zeta_{G,p}^{\vartriangleleft}(s)$
depends on residue classes:

\begin{dfn}\label{dfn:RFORC}
	Let $G$ be a $\mathfrak{T}$-group. We say that the zeta function $\zeta_{G}^{*}(s)$ is \textit{rational functions on residue classes} (RFORC) if there exists a fixed integer $N$ and finitely many rational functions $W_{i}(X,Y)\in\Q(X,Y)$ for $i=1,\ldots,N$ such that if
	$p\equiv i\mod N$ then 
	\[\zeta_{G,p}^{*}(s)=W_{i}(p,p^{-s}).\] 
	We say $\zeta_{G}^{*}(s)$ is non-RFORC if these is no such $N$.
\end{dfn}

In Section 3, we prove Theorem \ref{thm:RFORC7}, that for any class-2 $\mathfrak{T}$-group $G$ (a $\mathfrak{T}_2$-group for short) whose Hirsch length $h(G)\leq7$ and the Pfaffian hypersurface associated to $G$ is smooth and contains no lines (cf. Section 3), its normal subgroup zeta function $\zeta_{G}^{\vartriangleleft}(s)$ is RFORC.  Then, in Section 4, by explicit calculations we prove Theorem \ref{thm:gnp8} and Theorem \ref{thm:gnp8'} that there exist   $\mathfrak{T}_2$-groups $G$ of Hirsch length $h(G)=8$ such that  $\zeta_{G}^{\vartriangleleft}(s)$  are non-RFORC. So $h(G)\leq7$ is a strict bound. Section 2 will provide some preliminary results from number theory that allow us to prove these theorems.

One should realised that this is very similar but slightly different to the Uniformity problem, since you can have finitely uniform but still non-RFORC zeta functions (e.g. Theorem \ref{thm:gnp8} and Theorem \ref{thm:gnp8'} ). Since Higman's PORC conjecture refers specifically to the dependence of $f_n(p)$ on residue classes, studying how  $\zeta_{G}^{\vartriangleleft}(s)$ varies on residue classes might provide better insight on Higman's PORC conjecture then their uniformity.

\begin{rem}
	Note that we could not call $\zeta_{G}^{\triangleleft}(s)$ ``PORC" since they are not polynomials but rational functions. However, PORC and RFORC morally mean the same thing.
\end{rem} 
\subsection{Methodology}          
 The advantage of working with $\mathfrak{T}$-groups is the use of Mal'cev correspondence. Let $L$ be a Lie ring additively isomorphic to $\Z^{n}$ for some $n$. 
In  \cite{GSS/88} Grunewald, Segal and Smith analogously defined the \textit{subalgebra} and \textit{ideal  zeta functions of $L$} to be the Dirichlet generating series
\[\zeta_{L}^{*}(s):=\sum_{m=1}^{\infty}a_{m}^{*}(L)m^{-s},\]
where $s$ is a complex variable, with the Euler decomposition $\zeta_{L}^{*}(s)=\prod_{p\textrm{ prime}}\zeta_{L(\Zp)}^{*}(s)$,
where 
\[\zeta_{L(\Zp)}^{*}(s)=\zeta_{L\tensor\Zp}(s)=\sum_{i=0}^{\infty}a_{p^{i}}^{*}(L)p^{-is}\] are the \textit{local subalgebra} ($*=\leq$) and \textit{local ideal} ($*=\ideal$)  \textit{zeta functions of $L$}.

For each  $\mathfrak{T}$-group $G$, via the Mal'cev correspondence \cite[Theorem 4.1]{GSS/88} there exists a corresponding Lie $Q$-algebra $\mcL_G$ such that 
\[\zeta_{G,p}^{*}(s)=\zeta_{\mcL_G(\Zp)}^{*}(s)\]
for all but finitely many primes $p$.  This allows us to translate the computation of local  zeta functions of $\mathfrak{T}$-groups into that of local zeta functions of Lie rings. Most of results achieved here are actually came from studying the corresponding Lie rings.

\section{Preliminary results from Number Theory}
In this section we provide some preliminary results from number theory that we use later. As customary in number theory we represent a quadratic form over a ring $R$ by specifying a homogeneous polynomial of degree 2. In \cite{LN/1996} Lidl and Niederreiter proved the following theorem. 

\begin{thm}[\cite{LN/1996}, Theorem 6.21]\label{thm:quad.diag}
	Let $w$ be a quadratic form over $\Fp$, where $p$ is an odd prime. Then $w$ is equivalent to a diagonal quadratic form $a_1x_1^2+\cdots+a_my_m^2$.
\end{thm}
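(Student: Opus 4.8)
The plan is to proceed by induction on the number $m$ of variables, using the classical completion-of-squares procedure; the oddness of $p$ enters precisely in order to guarantee that $2$ is a unit in $\Fp$.

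First I would set up the symmetric-matrix formalism. Since $p$ is odd, the quadratic form $w$ in variables $x_1,\dots,x_m$ can be written uniquely as $w(x)=x^{\mathrm{t}}Bx$ for a symmetric matrix $B\in\Mat_m(\Fp)$, whose diagonal entries are the coefficients of the squares $x_i^2$ and whose off-diagonal entry in position $(i,j)$ is half the coefficient of the monomial $x_ix_j$. Two quadratic forms are equivalent exactly when their associated symmetric matrices are congruent, i.e.\ related by $B\mapsto C^{\mathrm{t}}BC$ with $C\in\GL_m(\Fp)$; so the statement amounts to saying that every symmetric matrix over $\Fp$ can be diagonalised by congruence.

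For the inductive step (the base case $m=1$ being trivial), assume $w\not\equiv 0$. If the coefficient of some square $x_i^2$ is nonzero, permute the variables so that it is the coefficient $a$ of $x_1^2$; collecting all monomials containing $x_1$ and completing the square — legitimate because $2a\in\Fp^{\times}$ — one obtains $w=a\bigl(x_1+\ell(x_2,\dots,x_m)\bigr)^2+w'(x_2,\dots,x_m)$ for a linear form $\ell$ and a quadratic form $w'$ in the remaining $m-1$ variables. The substitution $y_1=x_1+\ell$, $y_i=x_i$ for $i\ge 2$ is invertible, and applying the inductive hypothesis to $w'$ finishes this case. If instead every $x_i^2$ has coefficient $0$ but $w\neq 0$, then some mixed term $a_{ij}x_ix_j$ with $i\neq j$ has $a_{ij}\neq 0$; permuting, take it to be $a_{12}x_1x_2$, and apply the substitution $x_1=y_1+y_2$, $x_2=y_1-y_2$, $x_k=y_k$ for $k\ge 3$, which is invertible since its determinant equals $-2\in\Fp^{\times}$. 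This sends $a_{12}x_1x_2$ to $a_{12}(y_1^2-y_2^2)$, producing a square with nonzero coefficient, so we are reduced to the previous case.

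The only point requiring any care — and the reason the hypothesis that $p$ is odd cannot be dropped — is the threefold use of $2\in\Fp^{\times}$: it is needed to pass to the symmetric matrix $B$, to complete the square, and to make the hyperbolic substitution $x_1,x_2\mapsto y_1\pm y_2$ invertible. (Over $\F_2$ the form $x_1x_2$ is genuinely not diagonalisable.) Beyond that, the argument is routine bookkeeping with invertible linear changes of variables, organised as a downward induction on $m$, so I do not anticipate any real obstacle.
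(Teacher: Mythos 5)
Your proof is correct: it is the classical completion-of-squares induction (with the hyperbolic substitution handling the case of no square terms), and the oddness of $p$ is used exactly where it must be. The paper itself gives no proof of this statement---it is quoted from Lidl--Niederreiter, Theorem 6.21---and your argument is essentially the standard one found there, so there is nothing further to compare.
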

If the quadratic form $w\in\Fp[y_1,\ldots,y_n]$ is equivalent to a diagonal quadratic form $a_1x_1^2+\cdots+a_ny_n^2$, some of $a_{i}$'s might be 0. If none of them are 0, in other words the rank of $w$ is $n$, then we say $w$ is nondegenerate.

\begin{thm} \label{thm:quad} Let $w=w(y_{1},\,\ldots,\,y_{n})$ be a quadratic
	form in $n$ variables $y_{1},\,\ldots,\,y_{n}$ over $\mathbb{Z}.$
	Let 
	\[
	n_{w}(p)=\left|\left\{ \boldsymbol{y}\in\mathbb{P}^{n-1}(\mathbb{F}_{p})\mid\overline{w(\boldsymbol{y})}=0\right\} \right|,
	\]
	then $n_{w}(p)$ is PORC. \end{thm}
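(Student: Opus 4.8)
The plan is to reduce the count $n_w(p)$ to the well-known formulas for the number of projective points on a diagonal quadric over $\mathbb{F}_p$, and then track how those formulas depend on $p$ modulo a fixed integer. First I would invoke Theorem~\ref{thm:quad.diag}: for odd $p$ the reduction $\overline w \in \mathbb{F}_p[y_1,\dots,y_n]$ is equivalent, by an invertible linear change of variables over $\mathbb{F}_p$, to a diagonal form $a_1 x_1^2 + \cdots + a_n x_n^2$; since a projective linear change of coordinates is a bijection on $\mathbb{P}^{n-1}(\mathbb{F}_p)$, it does not change $n_w(p)$. One subtlety is that the rank $r$ of this diagonal form (the number of nonzero $a_i$) could a priori vary with $p$: this happens exactly when some minor of the matrix of $w$ vanishes modulo $p$, which occurs for only finitely many $p$, and each such $p$ can be absorbed into its own residue class (enlarging $N$ by those primes). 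So I may assume $r$ is constant. After discarding the variables with $a_i = 0$, a point of $\mathbb{P}^{n-1}(\mathbb{F}_p)$ on the quadric is determined by a point of $\mathbb{P}^{r-1}(\mathbb{F}_p)$ on the nondegenerate quadric $a_1 x_1^2 + \cdots + a_r x_r^2 = 0$ together with an arbitrary choice in the complementary $\mathbb{P}^{n-r-1}$ or on the "cone" over it; concretely $n_w(p) = N_r(p)\, p^{n-r} + \frac{p^{n-r}-1}{p-1}$, where $N_r(p)$ is the number of projective zeros of the nondegenerate rank-$r$ diagonal form, and both terms are manifestly rational (indeed polynomial) in $p$ once this affine-cone bookkeeping is done.

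It then remains to show $N_r(p)$ is PORC. The classical count (see e.g.\ Lidl--Niederreiter) gives, for a nondegenerate quadratic form in $r$ variables over $\mathbb{F}_p$ ($p$ odd), that the number of affine zeros is $p^{r-1}$ plus a correction of the form $(p-1)p^{(r-2)/2}\eta((-1)^{r/2}\Delta)$ when $r$ is even, and $p^{r-1}$ exactly plus no correction in the projective count adjustments when $r$ is odd, where $\Delta$ is the determinant of the form and $\eta$ is the quadratic character of $\mathbb{F}_p^\times$. Passing to projective points, $N_r(p) = \frac{(\#\text{affine zeros}) - 1}{p-1}$, which is again a rational function of $p$ once we know the value of the character term. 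The only non-polynomial ingredient is $\eta((-1)^{r/2}\Delta) = \left(\frac{(-1)^{r/2}\Delta}{p}\right)$ (when $r$ is even): but $\Delta$ is a fixed nonzero integer (for the finitely many $p$ dividing $\Delta$, again assign a separate residue class), so by quadratic reciprocity the value of this Legendre symbol depends only on $p$ modulo $4|\Delta|$. Choosing $N$ to be a common multiple of $4|\Delta|$ and all the finitely many exceptional primes, on each residue class mod $N$ the character term is a constant $\pm 1$, hence $N_r(p)$ — and therefore $n_w(p)$ — is given by a single rational (in fact polynomial) function of $p$.

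I would organise the write-up as: (i) reduce to the diagonal, nondegenerate case and fix the rank; (ii) state the classical affine point-count formula for nondegenerate diagonal quadrics, splitting into $r$ even and $r$ odd; (iii) convert affine counts to projective counts and reassemble the degenerate part; (iv) observe that the sole arithmetic dependence is through a single Legendre symbol $\left(\tfrac{c}{p}\right)$ with $c$ a fixed integer, and invoke quadratic reciprocity to conclude it is constant on residue classes mod $4|c|$. The main obstacle — really the only place that needs care rather than bookkeeping — is step (iv): making sure that \emph{all} the $p$-dependence has been isolated into that one character value and that nothing else (the rank, the sign conventions in the even/odd split, the handling of $p \mid 2\Delta$) secretly reintroduces non-PORC behaviour. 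Everything else is the standard dictionary between projective and affine point counts plus elementary manipulation of geometric series in $p$.
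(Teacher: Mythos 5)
Your proposal is correct and follows essentially the same route as the paper's proof: diagonalise via Lidl--Niederreiter (Theorem \ref{thm:quad.diag}), reduce to the nondegenerate case, apply the classical affine point counts, pass to projective counts via $(F_{w}(p)-1)/(p-1)$, and observe that the only $p$-dependence is the Legendre symbol $\left(\tfrac{(-1)^{n/2}D}{p}\right)$, which is constant on residue classes by quadratic reciprocity. You are somewhat more explicit than the paper about the rank/degenerate-cone bookkeeping and about the reciprocity step (which the paper leaves implicit), but these are refinements of the same argument rather than a different approach.
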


\begin{proof} Instead of $n_{w}(p),$ we start with 
	\[
	F_{w}(p)=\left|\left\{ \boldsymbol{y}\in\mathbb{F}_{p}^{n}\mid\overline{w(\boldsymbol{y})}=0\right\} \right|.
	\]
	
	The trick is that for any given $w$ there are only finitely many primes $p$ such that $p$ is even or $p\mid D=\text{det}(w)$. Therefore, to prove PORC one only needs to consider odd primes which do not divide $D$. For these primes, since they are odd, Theorem \ref{thm:quad.diag} showed that any nonzero quadratic form $w\in\Fp[y_1,\ldots,y_n]$ would be equivalent to a diagonal quadratic form  $a_1x_1^2+\cdots+a_my_m^2$ where $1\leq m \leq n$ and all $a_{i}\neq0$. Since the number of solutions of $a_1x_1^2+\cdots+a_my_m^2=0$ in $\Fp^n$ is $p^{n-m}$ times the number of solutions of $a_1x_1^2+\cdots+a_my_m^2=0$ in $\Fp^m$, it suffices to consider the case where $n=m$, that is, where $w$ is nondegenerate. In this case Theorem 6.26 and 6.27 in \cite{LN/1996} then tells us that
	\[
	F_{w}(p)=\begin{cases}
	p^{n-1} & \textrm{if \ensuremath{n} is odd},\\
	p^{n-1}+\left(\frac{(-1)^{n/2}D}{p}\right)(p-1)(p^{\frac{n}{2}-1}) & \text{if \ensuremath{n} is even}
	\end{cases}
	\]
	where $\left(\frac{a}{p}\right)$ is the Legendre symbol. Finally, we have 
	\[
	n_{w}(p)=\frac{F_{w}(p)-1}{p-1}=\begin{cases}
	p+p^{2}+\cdots+p^{n-2} & \textrm{if \ensuremath{n} is odd},\\
	p+p^{2}+\cdots+p^{n-2}+\left(\frac{(-1)^{n/2}D}{p}\right)p^{\frac{n}{2}-1} & \textrm{if \ensuremath{n} is even }
	\end{cases}
	\]
	as required.
\end{proof}

The other important result is the following: 

\begin{thm}[\cite{Cox2013}, \cite{Weinstein/2016}] 
	Let $f(x)=x^{3}-2$. The splitting behaviour of $f(x)$ modulo $p$ is not determined by congruence conditions on $p$. Furthermore, $f(x)$ splits modulo $p$ if and only if $p\equiv 1\mod3$ and $p=a^{2}+27b^{2}$ for integers $a$ and $b$.
\end{thm}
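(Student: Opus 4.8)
The plan is to reduce the whole statement to two classical facts about the splitting field $K=\Q(\sqrt[3]{2},\zeta_{3})$ of $f$ over $\Q$. First I would record the standard structural input: since $f=x^{3}-2$ is irreducible over $\Q$ with discriminant $-108$, which is not a square, $K/\Q$ is Galois with $\mathrm{Gal}(K/\Q)\cong S_{3}$, and the only rational primes ramifying in $K$ are $2$ and $3$. The elementary link to the problem is the Dedekind--Frobenius dictionary: for every prime $p$ the polynomial $f$ factors into distinct linear factors modulo $p$ if and only if $p$ splits completely in $K$. (For $p\nmid 6$ this is the usual translation between the factorization type of $f\bmod p$ and the cycle type of $\mathrm{Frob}_{p}$ on the three roots; for $p\in\{2,3\}$ both conditions fail, since $f$ is inseparable mod $p$ and $p$ ramifies in $K$.) Thus everything is governed by the set $\mathrm{Spl}(K)$ of primes splitting completely in $K$.

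For the first assertion I would argue that $\mathrm{Spl}(K)$ is not, even up to a finite set, a union of arithmetic progressions. This is the usual ``non-abelian obstruction'': if $\mathrm{Spl}(K)$ agreed up to finitely many primes with $\{p:p\bmod N\in A\}$ for some $N$ and some $A\subseteq(\Z/N\Z)^{\times}$, then applying the Chebotarev density theorem to the compositum $K\cdot\Q(\zeta_{N})$ yields primes $p\equiv 1\pmod N$ lying in $\mathrm{Spl}(K)$, so $1\in A$; hence $\mathrm{Spl}(\Q(\zeta_{N}))=\{p\equiv 1\pmod N\}\subseteq\mathrm{Spl}(K)$ up to a finite set, and since a finite Galois extension of $\Q$ is determined by its set of completely split primes this forces $K\subseteq\Q(\zeta_{N})$, contradicting $\mathrm{Gal}(K/\Q)\cong S_{3}$ being non-abelian. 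Together with the dictionary of the previous paragraph, this shows that whether $f$ splits modulo $p$ cannot be read off from any congruence condition on $p$.

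For the explicit criterion I would pass to class field theory over $\Q(\sqrt{-3})=\Q(\zeta_{3})$. The key identification --- which is precisely the running example of \cite{Cox2013} --- is that $K=\Q(\sqrt[3]{2},\zeta_{3})$ is the ring class field of the order of discriminant $-108$ (conductor $6$) in $\Q(\sqrt{-3})$. The standard correspondence between ring class fields and representability by the principal form of the given discriminant (here $x^{2}+27y^{2}$, of discriminant $-108$) then gives: for every prime $p$ (the finitely many small primes checked directly), $\;p\in\mathrm{Spl}(K)$ if and only if $p=a^{2}+27b^{2}$ for some $a,b\in\Z$. Finally, any prime $p$ with such a representation automatically satisfies $p\equiv 1\pmod 3$ --- reducing $p=a^{2}+27b^{2}$ mod $3$ gives $p\equiv a^{2}$, and $p\ne 3$ forces $3\nmid a$ --- so the two conditions combine into the form stated. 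I would also mention, without carrying it out, the more elementary route: for $p\equiv 1\pmod 3$ one has $\zeta_{3}\in\Fp$, so $x^{3}-2$ is totally split over $\Fp$ exactly when $2$ is a cubic residue mod $p$, i.e.\ when $\bigl(\tfrac{2}{\pi}\bigr)_{3}=1$ for a primary prime $\pi\mid p$ in $\Z[\zeta_{3}]$, and this cubic residue symbol is evaluated using cubic reciprocity.

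The step I expect to be the real obstacle is the explicit criterion: converting ``$p$ splits completely in $K$'' into the binary quadratic representation $p=a^{2}+27b^{2}$ is the substantive part, needing either the ring-class-field identification of $K$ together with the ring-class-field/principal-form dictionary, or a careful cubic-reciprocity computation in the Eisenstein integers, where the conductor-$6$ subtlety (the difference between $p=x^{2}+3y^{2}$ and $p=x^{2}+27y^{2}$) is exactly what makes the cubic residue condition nontrivial. By contrast the non-uniformity statement is soft, resting only on the non-abelianness of $S_{3}$ and standard density theorems.
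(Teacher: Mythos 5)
Your proposal is correct and is essentially the argument of the sources the paper cites: the paper offers no independent proof, merely invoking Cox (ring class field of the conductor-$6$ order in $\Q(\sqrt{-3})$, equivalently cubic reciprocity, for the $p=a^{2}+27b^{2}$ criterion) and Weinstein (the non-abelian $S_{3}$ obstruction to a congruence description), which is exactly the route you sketch. So you have simply unpacked the cited references rather than found a different proof.
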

\begin{proof}
	This follows from Theorem 4.15, Theorem 9.8 in \cite{Cox2013} and Theorem 2.2.1, Theorem 2.2.3 in \cite{Weinstein/2016}.  
\end{proof}

\begin{cor} \label{cor:noofroot}
	Let $n(p)$ denote the number of roots of $x^3-2$ modulo $p$. Then
	
	\[n(p)=\left\{
	\begin{array}{ll}
	0 & \text{if $p=1\bmod3$ and $p\neq a^{2}+27b^{2}$ for integers $a$ and $b$,} \\
	3 & \text{if $p=1\bmod3$ and $p=a^{2}+27b^{2}$ for integers $a$ and $b$,}\\
	1 & \text{if $p=2\bmod3$.}
	\end{array}
	\right.\]
	Furthermore, $n(p)$ is not PORC.
\end{cor}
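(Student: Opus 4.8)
The plan is to read off the root count from elementary facts about the cubing map on $\Fp^{\times}$ together with the cited Cox--Weinstein theorem, and then to turn the ``not determined by congruences'' part of that theorem into the non-PORC statement. First I would settle the primes $p=2,3$ directly: modulo $2$ one has $x^3-2\equiv x^3$ and modulo $3$ one has $x^3-2\equiv(x+1)^3$, each with exactly one root (and these two primes are irrelevant to the PORC question anyway). For $p\geq 5$ the polynomial $x^3-2$ is separable mod $p$. If $p\equiv 2\bmod 3$ then $\gcd(3,p-1)=1$, so $x\mapsto x^3$ permutes $\Fp^{\times}$ and $x^3=2$ has a unique solution, whence $n(p)=1$. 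If $p\equiv 1\bmod 3$ then $\Fp^{\times}$ contains a primitive cube root of unity $\omega$, so whenever $\alpha$ is a root so are $\alpha\omega$ and $\alpha\omega^2$, all distinct; hence $x^3-2$ has either $0$ or $3$ roots mod $p$, with $3$ roots exactly when it splits into linear factors over $\Fp$. By the preceding theorem this happens precisely when $p=a^2+27b^2$ for integers $a,b$. This yields the displayed case analysis.

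For the non-PORC assertion I would argue by contradiction. Suppose $n(p)$ is PORC with modulus $N$ and polynomials $g_i$; after replacing $N$ by $\operatorname{lcm}(N,3)$ we may assume $3\mid N$. For any residue class $i$ mod $N$ with $\gcd(i,N)=1$, Dirichlet's theorem gives infinitely many primes $p\equiv i\bmod N$, and for all of them $g_i(p)=n(p)\in\{0,1,3\}$; a polynomial attaining only finitely many values at infinitely many integers is constant, so $g_i$ is constant and $n(p)$ is eventually constant on each such class. Since $3\mid N$, it follows that for large $p$ the value of $n(p)$, hence whether $x^3-2$ splits completely modulo $p$, is determined by $p\bmod N$ --- contradicting the preceding theorem, which asserts precisely that the splitting of $x^3-2$ is not determined by congruence conditions on $p$. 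Hence $n(p)$ is not PORC.

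The substantive input is entirely the cited Cox--Weinstein theorem; the rest is routine. The one subtlety worth stating carefully is that ``PORC'' does not by itself say that $n(p)$ is a function of $p\bmod N$ (a non-constant polynomial can separate primes lying in the same residue class), so the argument genuinely needs the boundedness $n(p)\in\{0,1,3\}$ established in the first part in order to force the polynomials $g_i$ to be constant; that is the only step where I would be careful, and no other real obstacle is anticipated.
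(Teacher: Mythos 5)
Your proof is correct and follows essentially the route the paper intends: the corollary is stated there without a separate proof, as an immediate consequence of the cited Cox--Weinstein theorem, and your case analysis via the cubing map on $\mathbb{F}_p^{\times}$ plus the contradiction argument (boundedness of $n(p)\in\{0,1,3\}$ forcing the polynomials $g_i$ to be constant, hence congruence determination of the splitting, contradicting the theorem) is exactly the standard elaboration of that deduction. Your closing remark correctly identifies the only genuinely non-trivial point, namely that PORC alone does not give dependence on $p\bmod N$ until the polynomials are forced to be constant.
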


\section{Normal zeta functions of $\mathfrak{T}_{2}$-groups with Hirsch length $H(G)\leq7$}

Throughout the rest of this paper, whenever there is a presentation for a group or a ring, we always assume that all other unlisted commutators are trivial.

Let $G$ be a $\mathfrak{T}_{2}$-group with centre $Z(G)$ and derived group $G':=[G,G]$. For simplicity we assume that $G/G'$ and $G'$ are torsion-free abelian of rank $d$ and $d'$ respectively, so $G/G'\cong \Z^{d}$ and $G'\cong\Z^{d'}$.  Indeed $G/G'$ and $G'$ are always finitely generated abelian groups, and since we prove results about all but finitely many primes, we can restrict ourselves to primes $p$ not dividing the orders of the respective torsion parts.

In \cite{Voll/04} and \cite{Voll/05}, Voll introduced a method for computing normal subgroup zeta functions of $\mathfrak{T}_{2}$-groups based
on an enumeration of vertices in the affine Bruhat-Tits building associated
to $\text{SL}_{n}(\mathbb{Q}_{p}).$ 

\begin{dfn}\label{def:pfaffian} Let $G$ be a $\mathfrak{T}_{2}$-group with $Z(G)=G'$. Suppose $G$ has a presentation 
	\begin{equation}
	G=\left\langle x_{1},\,\ldots,\,x_{d},\,y_{1},\,\ldots,\,y_{d'}\mid[x_{i},\,x_{j}]=M(\boldsymbol{y})_{ij}\right\rangle ,
	\end{equation}
	where $M(\boldsymbol{y})$ is an anti-symmetric $d\times d$ matrix
	of $\mathbb{Z}-$linear forms in $\boldsymbol{y}=(y_{1},\,\ldots,\,y_{d'})$.
	 If the polynomial Pf$\left(M(\boldsymbol{y})\right):=\sqrt{\text{det}\left(M(\boldsymbol{y})\right)}$
	is not identically zero we call the hypersurface 
	\[
	\mathcal{P}_{G}:=(\textrm{Pf}(M(\boldsymbol{y}))=0)
	\]
	in $\mathbb{P}^{d'-1}$ the \textit{Pfaffian hypersurface} associated
	to $G.$ Conversely, every such matrix $M(\boldsymbol{y})$ can define
	a $\mathfrak{T}_{2}$-group $G$ via (2.1). \end{dfn}

\begin{thm}[\cite{Voll/05}, Theorem 3]\label{thm:Voll.class2} Assume that $\text{Pf}\left(M(\boldsymbol{y})\right)\in\mathbb{Z}[\boldsymbol{y}]$
	is non-zero and irreducible. Assume that the Pfaffian hypersurface
	$\mathcal{P}_{G}$ is smooth and contains no lines. For a prime $p$
	let 
	\[
	n_{\mathcal{P}_{G}}(p)=\left|\mathcal{P}_{G}(\mathbb{F}_{p})\right|
	\]
	denote the number of $\mathbb{F}_{p}$-rational points of $\mathcal{P}_{G}.$
	Then there are (explicitly determined) rational functions $W_{0}(X,\,Y),\,W_{1}(X,\,Y)\in\mathbb{Q}(X,\,Y)$
	such that if $\mathcal{P}_{G}$ has good reduction $\text{mod}\,p,$
	\begin{equation}
	\zeta_{G,\,p}^{\vartriangleleft}(s)=W_{1}(p,\,p^{-s})+n_{\mathcal{P}_{G}}(p)W_{2}(p,\,p^{-s}).
	\end{equation}

\end{thm}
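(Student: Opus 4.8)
The plan is to reproduce Voll's strategy (cf.\ \cite{Voll/04,Voll/05}): express $\zeta_{G,p}^{\ideal}(s)$ as a lattice sum, peel off an elementary Euler factor, and then evaluate what remains by organising it along the vertices of an affine Bruhat--Tits building, where the geometry of $\mathcal{P}_{G}$ takes over. The Mal'cev reduction to the associated Lie ring is already in place, so I would work directly with lattices. Since $Z(G)=G'$, a finite-index normal subgroup $H\ideal G$ is completely encoded by the sublattice $\Lambda=HG'/G'\le\Zp^{d}$, the sublattice $M=H\cap G'\le\Zp^{d'}$, and a homomorphism $\Lambda\to G'/M$; conversely every such datum with $[\Zp^{d},\Lambda]\subseteq M$ comes from a unique $H$, which is then automatically a normal subgroup, the displayed inclusion being exactly the condition $[G,H]\subseteq H$. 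As $|G:H|=|\Zp^{d}:\Lambda|\cdot|\Zp^{d'}:M|$ and there are $|\Zp^{d'}:M|^{d}$ homomorphisms $\Lambda\to G'/M$, this gives, for all but finitely many $p$,
\[
\zeta_{G,p}^{\ideal}(s)=\sum_{\Lambda,M}|\Zp^{d'}:M|^{d}\,\bigl(|\Zp^{d}:\Lambda|\,|\Zp^{d'}:M|\bigr)^{-s},
\]
the sum running over pairs of finite-index sublattices with $[\Zp^{d},\Lambda]\subseteq M$; this is the Dirichlet-series incarnation of the Grunewald--Segal--Smith cone integral \cite{GSS/88} for $\zeta_{G,p}^{\ideal}$.

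Next I would carry out the summation over $\Lambda$ for fixed $M$. Writing $\beta$ for the commutator form and $\Lambda_{M}:=\{w\in\Zp^{d}:\beta(\Zp^{d},w)\subseteq M\}$, the admissible $\Lambda$ are precisely the finite-index sublattices of $\Lambda_{M}$, which has full rank because $\Zp^{d'}/M$ is finite; hence $\sum_{\Lambda\le\Lambda_{M}}|\Zp^{d}:\Lambda|^{-s}=|\Zp^{d}:\Lambda_{M}|^{-s}\,\zeta^{\le}_{\Zp^{d}}(s)$ with $\zeta^{\le}_{\Zp^{d}}(s)=\prod_{i=0}^{d-1}(1-p^{i-s})^{-1}$. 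Therefore
\[
\zeta_{G,p}^{\ideal}(s)=\zeta^{\le}_{\Zp^{d}}(s)\sum_{M\le\Zp^{d'}}|\Zp^{d'}:M|^{d-s}\,|\Zp^{d}:\Lambda_{M}|^{-s},
\]
and since $\zeta^{\le}_{\Zp^{d}}(s)$ is itself a rational function of $p$ and $p^{-s}$ it can be absorbed into the eventual $W_{1},W_{2}$. Everything now hinges on the single sum $\sum_{M}|\Zp^{d'}:M|^{d-s}|\Zp^{d}:\Lambda_{M}|^{-s}$, i.e.\ on understanding how the index of the ``radical'' lattice $\Lambda_{M}$ varies with $M$.

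This is the geometric heart, and it is where $\mathcal{P}_{G}$ enters. I would stratify the sum over $M$ by vertices of the affine Bruhat--Tits building of $\SL_{d'}(\Qp)$, i.e.\ by the elementary-divisor type (cotype) of $M$ in $\Zp^{d'}$ together with the position $g\in\GL_{d'}(\Zp)$ of the vertex relative to the standard apartment. For such an $M$, reducing $\beta$ modulo $M$ exhibits $\Lambda_{M}$ as the common kernel, modulo suitable powers of $p$, of the matrices $M(\boldsymbol{y})$ specialised at the ``directions'' of $\Zp^{d'}/M$ determined by $g$, so $|\Zp^{d}:\Lambda_{M}|$ is read off from the $p$-adic valuations of the minors of $M(\boldsymbol{y})$ along those directions. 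Because $\mathrm{Pf}(M(\boldsymbol{y}))\ne 0$, the matrix $M(\boldsymbol{y})$ is invertible for $\boldsymbol{y}$ off the Pfaffian hypersurface, so generically $\Lambda_{M}$ is as small as possible and the index is a fixed power of $p$; the index jumps precisely when a relevant direction specialises onto $\mathcal{P}_{G}$ modulo $p$, and counting such $M$ over $\Fp$ introduces $n_{\mathcal{P}_{G}}(p)=|\mathcal{P}_{G}(\Fp)|$. Summing the resulting geometric series over the building — using the $\GL$-Bruhat decomposition and Hall-polynomial identities to evaluate the layered lattice sums — and recombining with $\zeta^{\le}_{\Zp^{d}}(s)$ yields, for primes $p$ at which $\mathcal{P}_{G}$ has good reduction,
\[
\zeta_{G,p}^{\ideal}(s)=W_{1}(p,p^{-s})+n_{\mathcal{P}_{G}}(p)\,W_{2}(p,p^{-s}),
\]
with $W_{1},W_{2}\in\Q(X,Y)$ explicit from the procedure. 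One may also read this as making the du Sautoy--Grunewald decomposition quoted in the introduction completely explicit: the hypotheses on $\mathcal{P}_{G}$ force all but one of the varieties $\overline{U_{i}}$ to have polynomial point counts, and the surviving one to be $\mathcal{P}_{G}$ itself.

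The step I expect to be the genuine obstacle is precisely this last one: showing that, after stratifying the building by the degeneracy behaviour of $M(\boldsymbol{y})$ relative to $M$, \emph{all} of the accumulated $\Fp$-point-count contributions collapse onto the single invariant $n_{\mathcal{P}_{G}}(p)$. This is exactly what the two geometric hypotheses are for. Smoothness of $\mathcal{P}_{G}$ keeps every incidence locus that appears — directions lying on $\mathcal{P}_{G}$, pencils of directions meeting it, and so on — smooth of the expected dimension, so that each of their point counts is polynomial in $p$ and in $n_{\mathcal{P}_{G}}(p)$ by a Lang--Weil/fibration argument; and the absence of lines on $\mathcal{P}_{G}$ ensures that a projective line of directions, which arises whenever $M$ has a repeated elementary divisor, meets $\mathcal{P}_{G}$ in a bounded number of points, so that the only point count ever produced is $n_{\mathcal{P}_{G}}(p)$ rather than, say, a count of lines on $\mathcal{P}_{G}$ (which would in general be neither two-term nor polynomial). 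Once this geometric input is secured, the remaining work — the reduction to lattice pairs, the factorisation of $\zeta^{\le}_{\Zp^{d}}(s)$, and the rational-function bookkeeping over the building — is essentially formal.
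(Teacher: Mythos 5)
Your proposal is correct in outline and follows essentially the same route as the source of this statement: the paper itself quotes the result from \cite{Voll/05} without proof, and your reduction to the lattice sum $\zeta^{\le}_{\Zp^{d}}(s)\sum_{M}|\Zp^{d'}:M|^{d-s}|\Zp^{d}:\Lambda_{M}|^{-s}$ is exactly the formula the paper recalls (with $X(\Lambda')$ in place of your $\Lambda_{M}$) in the proofs of Lemma \ref{lem:centre.ok} and Theorem \ref{thm:gnp8}, after which the evaluation over the Bruhat--Tits building with the smoothness and no-lines hypotheses controlling the incidence strata is precisely Voll's argument in \cite{Voll/04,Voll/05}. The one step you defer --- the explicit sector-family summation showing every stratum contributes a rational function times at most the single invariant $n_{\mathcal{P}_{G}}(p)$ --- is where the real work in Voll's proof lies, but your identification of how the two geometric hypotheses are used there is accurate.
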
 

 In this Section, by adapting this method, we prove Theorem \ref{thm:RFORC7}

\begin{thm}\label{thm:RFORC7} Let $G$ be a $\mathfrak{T}_{2}$-group
	where the Pfaffian hypersurface associated to $G$ is smooth and contains no lines. For $G$ of $h(G)\leq7,$ its normal
	zeta functions are always RFORC. \end{thm}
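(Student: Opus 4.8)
The plan is to reduce the theorem to Voll's formula (Theorem \ref{thm:Voll.class2}) by a case analysis on the possible ranks $(d,d')$ with $d+d'=h(G)\le 7$. First I would observe that when $G'$ is not central one can pass to a central quotient or handle the reducible cases directly; the substantive case is $Z(G)=G'$, where $G$ is determined by an antisymmetric $d\times d$ matrix $M(\boldsymbol y)$ of linear forms in $d'$ variables. Since $h(G)\le 7$ forces $d\le 5$ and $d'=h(G)-d$ small, there are only finitely many shapes of $(d,d')$ to consider, and in each I would examine the Pfaffian $\mathrm{Pf}(M(\boldsymbol y))$. For $d$ odd the Pfaffian vanishes identically, so $\mathcal P_G$ is empty and Voll's analysis (or an earlier uniformity result) already gives a single rational function, hence trivially RFORC. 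For $d$ even, $\mathrm{Pf}(M(\boldsymbol y))$ is a form of degree $d/2$ in $d'$ variables: when $d=2$ it is linear (a hyperplane, so $n_{\mathcal P_G}(p)$ is a polynomial in $p$), and when $d=4$ it is a quadric in at most $d'=3$ variables.

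The key step is therefore the case $d=4$, $d'\le 3$, where $\mathrm{Pf}(M(\boldsymbol y))$ is a quadratic form. Here I would invoke Theorem \ref{thm:quad}: the number of projective points $n_{w}(p)$ on a quadric hypersurface over $\F_p$ is PORC — explicitly it is a polynomial in $p$ plus a Legendre-symbol term $\left(\frac{(-1)^{n/2}D}{p}\right)p^{n/2-1}$, which depends only on $p \bmod 4|D|$. If the quadric is smooth (so $w$ is nondegenerate in its $d'$ variables) and contains no lines — which for $d'\le 3$ restricts the admissible forms — then Theorem \ref{thm:Voll.class2} applies and gives $\zeta_{G,p}^{\vartriangleleft}(s)=W_1(p,p^{-s})+n_{\mathcal P_G}(p)W_2(p,p^{-s})$ for all primes $p$ of good reduction. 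Substituting the PORC expression for $n_{\mathcal P_G}(p)=n_w(p)$ yields finitely many rational functions $W_i(X,Y)$ indexed by residue classes modulo $N=4|D|$ (times a further modulus absorbing the finitely many bad primes, each of which contributes its own $W$), so $\zeta_G^{\vartriangleleft}(s)$ is RFORC. When $\mathrm{Pf}(M(\boldsymbol y))$ is reducible or not irreducible, I would note that the hypothesis that $\mathcal P_G$ is smooth and contains no lines already excludes the degenerate configurations, or fall back on the explicitly computed lists of normal zeta functions for small $\mathfrak T_2$-groups in the literature, all of which are uniform and hence RFORC.

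The main obstacle I expect is the bookkeeping of which $(d,d')$ actually arise and verifying that the smoothness / no-lines hypothesis combined with $h(G)\le 7$ genuinely forces the Pfaffian to be either identically zero, linear, or a nondegenerate quadric — in particular ruling out higher-degree Pfaffians (which would need $d\ge 6$, hence $h(G)\ge 7$ with $d'\le 1$, a degenerate case with no lines to worry about) and handling the boundary case $d=4,d'=3$ carefully, since a smooth quadric surface in $\mathbb P^2$ is just a smooth conic and one must check the ``contains no lines'' clause is compatible with Voll's theorem being invoked. A secondary point requiring care is the passage from Lie rings back to groups: Theorem \ref{thm:Voll.class2} and the Mal'cev correspondence only control all but finitely many primes, so the finitely many exceptional primes must each be swept into the residue-class data by enlarging $N$ — this is harmless since each bad prime is a single residue class modulo a large enough $N$, but it should be stated explicitly so that the final $N$ and the rational functions $W_i$ are genuinely finite in number.
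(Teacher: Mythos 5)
Your skeleton (reduce to $Z(G)=G'$, run through the finitely many pairs $(d,d')$ with $d+d'\le 7$, and use Theorem \ref{thm:quad} to make the point count of a quadric Pfaffian PORC) matches the paper's strategy where $d$ is even, and your treatment of the key $(4,3)$ case is essentially the paper's. However, there is a genuine gap in how you dispose of the odd-$d$ cases. You assert that when $d$ is odd the Pfaffian vanishes identically, ``so $\mathcal P_G$ is empty and Voll's analysis (or an earlier uniformity result) already gives a single rational function, hence trivially RFORC.'' When $\mathrm{Pf}(M(\boldsymbol y))\equiv 0$, Theorem \ref{thm:Voll.class2} simply does not apply (it requires the Pfaffian to be nonzero and irreducible), and the conclusion you draw from the vanishing is false in general: the local factors are then governed by the degeneracy locus of $M(\boldsymbol y)$, which can contribute a nontrivial, even non-PORC, point count. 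The paper's own $G_{np8}$ (Theorem \ref{thm:gnp8}), with $d=5$, $d'=3$, is exactly such an example: the Pfaffian is identically zero, yet $\zeta^{\vartriangleleft}_{G,p}(s)=W_1(p,p^{-s})+|V(\Fp)|\,W_2(p,p^{-s})$ with $|V(\Fp)|$ not PORC. So ``Pfaffian $\equiv 0$ implies one rational function'' cannot serve as the argument; for $h(G)\le 7$ the odd-$d$ cases must actually be worked out, and this is where most of the paper's proof lives.

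Concretely, the bulk of the paper's proof is the $d'=2$ analysis your proposal skips: it invokes Voll's separate theorem for derived group of Hirsch length $2$, where the local factor is $\sum_I c_{p,I}W_I(p,p^{-s})$ with $c_{p,I}$ counting points of $\mathbb P^1(\Fp)$ at which prescribed polynomials $f_i$ vanish, the $f_i$ coming from the Grunewald--Segal central decomposition of the associated $\mathfrak D^*$-group into indecomposables. The point that makes this RFORC is that $d\le 5$ forces every $f_i$ to be linear or quadratic, so the root counts are PORC by Theorem \ref{thm:quad}; for $d=6$, $d'=2$ (Hirsch length $8$) a cubic such as $y_1^3+2$ can occur and RFORC fails (Theorem \ref{thm:gnp8'}), which shows the degree bound is doing real work. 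The $(3,3)$ case is likewise not a Pfaffian argument: one shows the associated Lie ring is isomorphic to the free class-2 nilpotent Lie ring on $3$ generators, whose normal zeta function is uniform. Two smaller points: the reduction to $Z(G)=G'$ is carried out in the paper by Lemma \ref{lem:centre.ok}, comparing $G$ with $G\times\Z^r$ inside Voll's lattice summation (your ``pass to a central quotient'' is not the right mechanism and is not justified), and $h(G)\le 7$ does not force $d\le 5$: the $(6,1)$ case occurs and is handled as a central product of Heisenberg groups, with known uniform local factors.
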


To achieve this we start with a simple but crucial Lemma:

\begin{lem}\label{lem:centre.ok}
	Let $G$ be a $\mathfrak{T}_{2}$-group where $Z(G)=G'$ and satisfies the conditions in Theorem \ref{thm:Voll.class2}. If $\zeta_{G}^{\ideal}(s)$ is RFORC, then so is $\zeta_{G\times\Z^r}^{\ideal}(s)$ for any $r\in\N$. 
\end{lem}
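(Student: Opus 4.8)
The plan is to reduce the computation of $\zeta_{G\times\Z^r}^{\ideal}(s)$ to that of $\zeta_{G}^{\ideal}(s)$ via an explicit factorisation formula, and then to argue that RFORC is preserved under this operation. First I would pass to the associated Lie rings: if $\mcL = \mcL_G$ is the $\mathfrak{T}_2$ Lie ring corresponding to $G$, then the Lie ring corresponding to $G\times\Z^r$ is $\mcL \oplus \Z^r$ where $\Z^r$ sits as a central abelian direct summand (with trivial bracket, and not part of the derived subalgebra). So it suffices to show: if $\zeta_{\mcL}^{\ideal}(s)$ is RFORC then so is $\zeta_{\mcL\oplus\Z^r}^{\ideal}(s)$.

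The key step is a known product formula for ideal zeta functions when one adjoins a central abelian summand. Concretely, for each prime $p$ one has an identity expressing $\zeta_{(\mcL\oplus\Z^r)(\Zp)}^{\ideal}(s)$ in terms of $\zeta_{\mcL(\Zp)}^{\ideal}(s)$ together with ``abelian'' factors coming from the $\Z^r$ part — typically something of the shape
\[
\zeta_{(\mcL\oplus\Z^r)(\Zp)}^{\ideal}(s) = \sum_{\text{(finite combinatorial data)}} \bigl(\text{rational function in } p, p^{-s}\bigr)\cdot \zeta_{\mcL(\Zp)}^{\ideal}(s - c)
\]
for various integer shifts $c$ and where the coefficients are universal rational functions in $p$ and $p^{-s}$ independent of $p$. (This is in the spirit of the base-extension / direct-sum manipulations in \cite{GSS/88} and the handling of abelian summands; one isolates the projection of an ideal of $\mcL\oplus\Z^r$ onto $\mcL$ and the ``pure $\Z^r$'' part, and the latter contributes copies of the Riemann zeta factor $\zeta_p(s-i)$ for $0\le i\le r-1$, i.e. $\prod_{i=0}^{r-1}(1-p^{i-s})^{-1}$, together with lattice-counting corrections that are themselves rational in $p$.) Since such a formula writes $\zeta_{(\mcL\oplus\Z^r)(\Zp)}^{\ideal}(s)$ as a finite $\Q(p,p^{-s})$-linear combination of shifted copies of $\zeta_{\mcL(\Zp)}^{\ideal}(s)$ with coefficients not depending on $p$, the RFORC property transfers: if $\zeta_{\mcL(\Zp)}^{\ideal}(s) = W_i(p,p^{-s})$ whenever $p\equiv i \bmod N$, then $\zeta_{(\mcL\oplus\Z^r)(\Zp)}^{\ideal}(s)$ is given, on the same residue classes mod $N$, by the corresponding rational function obtained by substituting $W_i$ (and its shifts $W_i(p, p^{1-s}), \dots$) into the universal combination — this is again a fixed rational function in $p, p^{-s}$ on each class, and we may have to enlarge $N$ only if the abelian factors themselves introduce new congruence conditions, which they do not (Euler factors of Riemann zeta are uniform in $p$). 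Finally one observes that the finitely many bad primes (those excluded by the Mal'cev correspondence, or dividing relevant torsion or discriminants) can be absorbed by enlarging $N$ to be divisible by all of them, assigning each such $p$ its own rational function.

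The main obstacle I expect is pinning down the precise product/shift formula for $\zeta^{\ideal}$ under adjoining $\Z^r$ and checking that its coefficients really are universal rational functions in $p$ and $p^{-s}$ (as opposed to themselves being only PORC or RFORC in $p$) — in other words, verifying that the combinatorics of ideals of $\mcL\oplus\Z^r$ that project to a given ideal of $\mcL$ is genuinely $p$-uniform. One clean way to see this: an ideal of finite index in $\mcL(\Zp)\oplus\Zp^r$ is determined by its image $\mathfrak{a}$ in $\mcL(\Zp)$ (an ideal of $\mcL(\Zp)$, since $\Z^r$ is central so imposes no extra bracket condition) together with a lattice in $\Zp^r$ and a homomorphism datum gluing them, and the count of these for fixed index decomposes as a convolution of $a^{\ideal}_{p^j}(\mcL)$ with elementary divisor counts in $\Zp^r$, all of which are manifestly polynomial in $p$. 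Carrying this through carefully, and bookkeeping the index shift correctly, is the only real work; once the formula is in hand the RFORC conclusion is immediate by the substitution argument above.
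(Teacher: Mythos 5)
Your reduction hinges on a ``known product formula'' expressing $\zeta^{\ideal}_{(\mcL\oplus\Z^r)(\Zp)}(s)$ as a finite $\Q(p,p^{-s})$-linear combination of shifted copies of $\zeta^{\ideal}_{\mcL(\Zp)}(s)$ with universal coefficients, and this is precisely where the argument breaks. Parametrise a finite-index sublattice $M\leq\mcL(\Zp)\oplus\Zp^r$ by its projection $M_1=\pi_{\mcL}(M)$, its intersection $M_2=M\cap\Zp^r$, and a gluing homomorphism $\phi\colon M_1\to\Zp^r/M_2$. The ideal condition on $M$ is \emph{not} just that $M_1\ideal\mcL(\Zp)$: since $[\mcL\oplus\Zp^r,M]=[\mcL,M_1]\oplus 0$ must lie in $M$, one also needs $\phi$ to vanish on $[\mcL,M_1]$. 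Hence the number of ideals lying above a fixed $M_1$ (with fixed $M_2$) is $|\Hom(M_1/[\mcL,M_1],\,\Zp^r/M_2)|$, which depends on the elementary-divisor type of $M_1/[\mcL,M_1]$ and not merely on the index $|\mcL:M_1|$. For example, in the Heisenberg Lie ring $\la x,y,z\mid [x,y]=z\ra$ over $\Zp$, the ideals $M_1=\la px,y,z\ra$ and $M_1'=\la px,py,z\ra$ both have index $p^2$, but $M_1/[\mcL,M_1]\cong\Zp^2$ while $M_1'/[\mcL,M_1']\cong\Zp^2\oplus\Z/p$, so they support different numbers of gluings to $\Z/p$. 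Consequently the coefficient count is not a convolution of $a^{\ideal}_{p^j}(\mcL)$ with elementary-divisor counts in $\Zp^r$, and no substitution argument can be run from the abstract RFORC property of $\zeta^{\ideal}_G$ alone. One can also see the failure of a single shift directly in the class-2 formula: the two weights $|L':\Lambda'|$ and $|L:X(\Lambda')|$ occur with exponents $d-s$ and $-s$ respectively, and passing to $G\times\Z^r$ changes only the first exponent to $d+r-s$, which no substitution $s\mapsto s-c$ reproduces.

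This also explains why your proposal never uses the hypotheses of the lemma ($Z(G)=G'$, smooth Pfaffian with no lines), which is a warning sign. The paper does not relate the two zeta functions as abstract Dirichlet series at all: it invokes Theorem \ref{thm:Voll.class2} to write $\zeta^{\ideal}_{G,p}(s)=W_1(p,p^{-s})+n_{\mathcal{P}_G}(p)W_2(p,p^{-s})$, so that RFORC is governed by the single arithmetic quantity $n_{\mathcal{P}_G}(p)$, and then recomputes $\zeta^{\ideal}_{G\times\Z^r,p}(s)$ from the lattice sum $\sum_{\Lambda'\leq L_p'}|L_p':\Lambda'|^{d+r-s}|L_p:X(\Lambda')|^{-s}$, observing that adjoining $\Z^r$ only replaces $d$ by $d+r$ while the prime-sensitive factor $|L_p:X(\Lambda')|$ — the sole source of $n_{\mathcal{P}_G}(p)$ — is unchanged, since $X_{L\oplus\Z^r}(\Lambda')=X_L(\Lambda')\oplus\Zp^r$. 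If you want to salvage your route, you would need to carry the finer invariants $M_1/[\mcL,M_1]$ through the bookkeeping, which in the class-2 case amounts to redoing exactly this $X(\Lambda')$-analysis rather than appealing to a universal product formula.
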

\begin{proof}
By Theorem \ref{thm:Voll.class2}, 
\begin{equation}\label{Gporc}
\zeta_{G,\,p}^{\vartriangleleft}(s)=W_{1}(p,\,p^{-s})+\underline{n_{\mathcal{P}_{G}}(p)}W_{2}(p,\,p^{-s}).
\end{equation}
Hence $\zeta_{G}^{\ideal}(s)$ is RFORC if and only if $n_{\mathcal{P}_{G}}(p)$ is PORC. Now, for such $G$ we can compute $\zeta_{G,\,p}^{\vartriangleleft}(s)$ by looking at its corresponding Lie ring
$L_p:=(G/G'\oplus G')\otimes_{\Z}\Z_p$
and compute $\zeta_{L_p}^{\vartriangleleft}(s)$. For each lattice $\Lambda'\leq L_p'$ put $X(\Lambda')/\Lambda'=Z(L_p/\Lambda')$. Then (essentially \cite[Lemma 6.1]{GSS/88} and \cite{Voll/04}) we have 
\begin{align*}
\zeta_{G,\,p}^{\vartriangleleft}(s)=\zeta_{L_p}^{\vartriangleleft}(s)&=\zeta_{\Zp^d}(s)\sum_{\Lambda'\leq L_p'}|L_p':\Lambda'|^{d-s}|L_p:X(\Lambda')|^{-s}\\
&=\zeta_{\Zp}(s)\zeta_p((d+d')s-dd')\sum_{\substack{\Lambda'\leq L_p'\\\Lambda'\textrm{ maximal}}}|L_p':\Lambda'|^{d-s}|L_p:X(\Lambda')|^{-s},
\end{align*}
where a lattice $\Lambda\leq\Zp^{n}$ is maximal in its homothety class if $p^{-1}\Lambda\leq\Zp^{n}$. Let $A(p,p^{-s}):=\sum_{\substack{\Lambda'\leq L_p'\\\Lambda'\textrm{ maximal}}}|L_p':\Lambda'|^{d-s}|L_p:X(\Lambda')|^{-s}$. The key observation is that the $n_{\mathcal{P}_{G}}(p)$ only comes from $|L_p:X(\Lambda')|$, and does not depend on $|L_p':\Lambda'|$ or $d$. Therefore we get 
\begin{align}
\zeta_{G\times\Z^r,\,p}^{\vartriangleleft}(s)&=\zeta_{\Zp^{d+r}}(s)\sum_{\Lambda'\leq L_p'}|L_p':\Lambda'|^{d+r-s}|L_p:X(\Lambda')|^{-s}\notag\\
&=\zeta_{\Zp^{d+r}}(s)\zeta_p((d+d')s-(d+r)d')\sum_{\substack{\Lambda'\leq L_p'\\\Lambda'\textrm{ maximal}}}|L_p':\Lambda'|^{d+r-s}|L_p:X(\Lambda')|^{-s},\notag\\
&=W_{1}'(p,\,p^{-s})+\underline{n_{\mathcal{P}_{G}}(p)}W_{2}'(p,\,p^{-s})\label{Gcentreporc}.
\end{align}
Since $n_{\mathcal{P}_{G}}(p)$ in \eqref{Gporc} and \eqref{Gcentreporc} are the same, $\zeta_{G\times\Z^r}^{\ideal}(s)$ is also RFORC for any $r\in\N$. 
\end{proof}

Lemma \ref{lem:centre.ok} implies that we only need to prove Theorem \ref{thm:RFORC7} for the case $Z(G)=G'$. The strategy for proving Theorem \ref{thm:RFORC7} is to look at the behaviour of $\zeta_{G,\,p}^{\vartriangleleft}(s)$
for different values of $d$ and $d'$. Let us call $G$ a $(d,d')$-group
if $h(G/G')=d$ and $h(G')=d'.$ As we are only looking at the $\mathfrak{T}_{2}$-groups
$G$ of $h(G)\leq7,$ the possible cases are (2,1), (3,1), (4,1),
(5,1), (6,1), (3,2), (4,2), (5,2), (3,3), (4,3).

\subsection{$d'=1$}

When $d'=1$, (2,1), (4,1) and (6,1) are the central products of 1, 2 and 3 copies of the Heisenberg group $H$ respectively, and their local normal zeta functions are given in \cite[Theorem 2.22]{duSWoodward/08} and \cite{GSS/88}. They are RFORC.

\subsection{$d'=2$}

For $d'=2$ recall Voll's method in \cite{Voll/04}. 

\begin{thm}[\cite{Voll/04}, Theorem 2] Let $G$ be a $\mathfrak{T}_{2}$-group
	with derived group $G'$ of Hirsch length 2. Then there
	are irreducible polynomials $f_{1}(t),\,\ldots,\,f_{m}(t)\in\mathbb{Q}[t]$
	and rational functions $W_{I}(X,\,Y),\,I\subseteq\{1,\,\ldots,\,m\}$
	such that for almost all primes $p$ 
	\[
	\zeta_{G,\,p}^{\vartriangleleft}(s)=\sum_{I\subseteq\{1,\,\ldots,\,m\}}c_{p,\,I}W_{I}(p,\,p^{-s}),
	\]
	where 
	\[
	c_{p,\,I}=\left|\left\{ x\in\mathbb{P}^{1}(\mathbb{F}_{p}):f_{i}(x)\equiv0\;\text{mod}\;p\textrm{\ if and only if }i\in I\right\} \right|.
	\]
	In particular, $\zeta_{G}^{\vartriangleleft}(s)$ is finitely
	uniform. \end{thm}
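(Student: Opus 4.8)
The plan is to pass to the associated Lie ring and enumerate ideals of finite index according to their intersection with the centre, thereby reducing the whole computation to a sum over the Bruhat--Tits tree of $\PGL_2(\Qp)$.

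By the Mal'cev correspondence it suffices to work with the corresponding class-$2$ Lie ring, and by a standard reduction (cf. the proof of Lemma \ref{lem:centre.ok}: splitting off a free abelian direct factor only substitutes $d$ for $d+r$ and preserves finite uniformity), we may assume $Z(G)=G'$. Fix a prime $p$ and write $L_p = \Z_p^d \oplus \Z_p^2$, where $L'_p = \Z_p^2$ and the bracket is encoded by an antisymmetric $d\times d$ matrix $M(\mathbf y) = y_1 M_1 + y_2 M_2$ of $\Z_p$-linear forms in $\mathbf y=(y_1,y_2)$, so that $[e_i,e_j] = \bigl((M_1)_{ij},(M_2)_{ij}\bigr)\in\Z_p^2$. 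As in Lemma \ref{lem:centre.ok}, for a sublattice $\Lambda'\le L'_p$ put $X(\Lambda')/\Lambda'=Z(L_p/\Lambda')$; then $L'_p\subseteq X(\Lambda')$ and
\[
\zeta_{L_p}^{\triangleleft}(s)=\zeta_{\Z_p^d}(s)\sum_{\Lambda'\le L'_p}|L'_p:\Lambda'|^{d-s}\,|L_p:X(\Lambda')|^{-s},
\]
where $|L_p:X(\Lambda')|$ is the index in $\Z_p^d$ of $\{\,\bar v : [\bar v,L_p]\subseteq\Lambda'\,\}$. So everything comes down to understanding this index as $\Lambda'$ ranges over sublattices of $\Z_p^2$.

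Now parametrise the sublattices $\Lambda'\le\Z_p^2$ by the vertices of the Bruhat--Tits tree: up to homothety every $\Lambda'$ has the form $\Z_p w_1 + p^{b}\Z_p w_2$ with $b\ge 0$, and for $b\ge 1$ it corresponds to a point of $\mathbb{P}^1(\Z_p/p^b)$ reducing to some $x_0\in\mathbb{P}^1(\F_p)$; moreover $|L'_p:\Lambda'|=p^{b}$ and there are exactly $p^{b-1}$ such homothety classes above each $x_0$. Writing $\bar v\mapsto[\bar v,\cdot\,]$ in coordinates adapted to $\Lambda'$, one checks that the index above equals $\prod_i p^{\min(b,e_i)}$, where $p^{e_1},\dots,p^{e_d}$ are the $p$-adic elementary divisors of the antisymmetric matrix $M(\mathbf y_0)$ attached to the direction of $\Lambda'$ (identifying $\mathbb{P}^1$ with its dual), so that $\sum_i e_i = 2\,\val_p\mathrm{Pf}(M(\mathbf y_0))$. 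Hence the contribution of all $\Lambda'$ whose direction reduces to a fixed $x_0$, summed over all depths $b$, is controlled entirely by the elementary divisor type of $M$ along the line through $x_0$ — equivalently, by the vanishing orders at $x_0$ of the binary forms $\mathrm{Pf}(M(\mathbf y))$ and its relevant sub-Pfaffians. Letting $f_1(t),\dots,f_m(t)$ be the $\Q$-irreducible factors of these (dehomogenised) forms, then for all but finitely many $p$ — those for which all the forms have good reduction and their residual roots lift uniquely — the type of $x_0$ depends only on $I=\{\,i : f_i(x_0)\equiv 0\bmod p\,\}$, and summing the corresponding geometric-type series produces a rational function $W_I(p,p^{-s})$ depending only on $I$ (the depth-$0$ term together with $\zeta_{\Z_p^d}(s)$ being uniform, absorbed into $W_\emptyset$). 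Grouping the points of $\mathbb{P}^1(\F_p)$ by type yields $\zeta_{G,p}^{\triangleleft}(s)=\sum_I c_{p,I}W_I(p,p^{-s})$.

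The main obstacle is this last step: establishing that $|L_p:X(\Lambda')|$ is governed in a uniform way by the $p$-adic elementary divisors of $M(\mathbf y)$ near the relevant point of $\mathbb{P}^1$, and that the resulting double sum — over depths $b$ (with $p^{b-1}$ lattices per residual direction) and over the finitely many degeneracy strata — telescopes to a rational function in $p$ and $p^{-s}$. This is the heart of Voll's building-theoretic argument and rests on a careful Smith normal form analysis of $M$ near each point of the Pfaffian locus. Finally, finite uniformity is then formal: for $I\neq\emptyset$ the coefficient $c_{p,I}$ is bounded independently of $p$ (it is at most $\min_{i\in I}\deg f_i$), so writing $c_{p,\emptyset}=(p+1)-\sum_{I\neq\emptyset}c_{p,I}$ gives
\[
\sum_I c_{p,I}W_I(p,p^{-s}) = (p+1)\,W_\emptyset(p,p^{-s}) + \sum_{I\neq\emptyset}c_{p,I}\bigl(W_I(p,p^{-s})-W_\emptyset(p,p^{-s})\bigr),
\]
and the right-hand side takes only finitely many values as $p$ varies.
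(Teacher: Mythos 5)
This statement is quoted from \cite{Voll/04} and is not proved in the paper itself, so there is no internal proof to compare against line by line; but your outline does follow the same route that Voll takes and that the paper replicates in detail in Section 4 for a $d'=3$ example: pass to the Lie ring with $Z(L)=L'$, use $\zeta_{L_p}^{\triangleleft}(s)=\zeta_{\Zp^d}(s)\sum_{\Lambda'\leq L_p'}|L_p':\Lambda'|^{d-s}|L_p:X(\Lambda')|^{-s}$, parametrise the lattices $\Lambda'\leq\Zp^2$ by the tree for $\SL_2(\Qp)$, and stratify according to how the direction of $\Lambda'$ meets the degeneracy locus of $M(\boldsymbol{y})$, the strata being cut out by finitely many binary forms whose irreducible factors give the $f_i$. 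Your closing deduction of finite uniformity from the boundedness of $c_{p,I}$ for $I\neq\emptyset$ together with $\sum_I c_{p,I}=p+1$ is also fine.

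There are, however, two genuine problems. First, your index formula is wrong: for $\Lambda'$ maximal of type $(p^b,1)$ with dual direction $\alpha$, the condition $\bar v\,M(\alpha)\equiv 0 \bmod p^b$ gives
\[
|L_p:X(\Lambda')|=\prod_{i=1}^{d}p^{\max(b-e_i,\,0)}=p^{db-\sum_i\min(b,e_i)},
\]
not $\prod_i p^{\min(b,e_i)}$; with your formula a generic direction (all $e_i=0$) would contribute index $1$ instead of $p^{db}$, so every subsequent weight and every $W_I$ would come out wrong. Relatedly, $\sum_i e_i=2\val_p\mathrm{Pf}(M(\boldsymbol{y}_0))$ only makes sense when $d$ is even; for odd $d$ (which occurs here, e.g.\ the $(3,2)$ and $(5,2)$ cases) the Pfaffian vanishes identically and one must work with the determinantal divisors/sub-Pfaffians — exactly the complication the paper has to handle by hand in Section 4. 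Second, and more importantly, the decisive step — that for almost all $p$ the total contribution of all lattices lying above a fixed residual point $x_0\in\mathbb{P}^1(\Fp)$ depends only on the set $I$ of $f_i$ vanishing at $x_0$, and that the resulting double sum over depths and lifts closes up to a rational function $W_I(p,p^{-s})$ whose coefficients are independent of $p$ — is not argued but explicitly deferred to ``the heart of Voll's building-theoretic argument.'' Since that uniform elementary-divisor analysis near the degeneracy locus is precisely the content of the theorem, what you have is a correct plan of attack rather than a proof.
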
 

Although it proves finite uniformity for $d'=2,$ it does not prove whether
$\zeta_{G}^{\vartriangleleft}(s)$ is RFORC or not. The key observation
here is that the polynomials $f_{i}$ come from either the anti-symmetric
$d\times d$ matrix $M(\boldsymbol{y})$ or its decomposed parts,
and as long as $d\leq5$ (which is our case) the behaviour of $c_{p,\,I}$
is PORC.

To make this paper self-contained we recall the definition of indecomposable groups and Theorem
6.2 and 6.3 in \cite{GSegal/84}. 

\begin{dfn} Let $G$ be a radicable $\mathfrak{T}_{2}$-group of finite Hirsch length with centre of Hirsch length 2, a so-called
	$\mathfrak{D}^{*}$-groups. A \textit{central decomposition} of $G$
	is a family $\{H_{1},\,\ldots,\,H_{m}\}$ of subgroups of $G$ such
	that:
	\begin{enumerate}
		\item $Z(H_{i})=Z(G)$ for each $i;$ 
		\item $G/Z(G)$ is the direct product of the subgroups $H_{i}/Z(G);$ and 
		\item $[H_{1},\,H_{j}]=1$ whenever $i\neq j.$ 
	\end{enumerate}
	The group $G$ is \textit{(centrally) indecomposable} if the only
	such decomposition is $\{G\}.$\end{dfn} 
\begin{thm}[\cite{GSegal/84}, Theorem 6.2] Every $\mathfrak{D}^{*}$-group
	$G$ has a central decomposition into indecomposable constituents,
	and the decomposition is unique up to an automorphism of $G.$ In
	particular, the constituents are unique up to isomorphism. 
\end{thm}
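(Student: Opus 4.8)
The statement is the symplectic incarnation of Kronecker's theory of pencils of forms, so I will only indicate the shape of a proof. The plan is: pass to the Mal'cev Lie algebra, recode the bracket as a pencil of alternating bilinear forms, settle existence by a dimension count, and prove uniqueness by a Krull--Schmidt exchange argument. Via the Mal'cev correspondence a $\mathfrak{D}^{*}$-group $G$ corresponds to a nilpotent Lie $\Q$-algebra $\mcL$ of class $\leq 2$ with $\dim_{\Q}Z(\mcL)=2$, automorphisms of $G$ correspond to automorphisms of $\mcL$, and central decompositions of $G$ correspond bijectively to families of subalgebras $\{\mfh_{i}\}$ of $\mcL$ with $Z(\mfh_{i})=Z(\mcL)$, with $\mcL/Z(\mcL)=\bigoplus_{i}\mfh_{i}/Z(\mcL)$, and with $[\mfh_{i},\mfh_{j}]=0$ for $i\neq j$. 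Put $Z:=Z(\mcL)$ and $W:=\mcL/Z(\mcL)$; the bracket descends to an alternating bilinear map $\beta\colon W\times W\to Z$ which is \emph{nondegenerate}, meaning no nonzero $w\in W$ is $\beta$-orthogonal to all of $W$ (else any preimage of $w$ would be central). In this language a central decomposition is precisely an orthogonal direct-sum decomposition $W=\bigoplus_{i}W_{i}$ in which every $W_{i}$ is $\beta$-nondegenerate, and a constituent is indecomposable exactly when its $W_{i}$ admits no further such splitting. Fixing a $\Q$-basis of $Z$ turns $\beta$ into a pencil of ordinary alternating $\Q$-forms, orthogonal decompositions of $(W,\beta)$ become simultaneous orthogonal decompositions of the pencil, and the constituents are the indecomposable blocks of its Kronecker normal form. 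Existence is then immediate: if $(W,\beta)$ is indecomposable, stop; otherwise split off a proper $\beta$-nondegenerate orthogonal summand and recurse on $\dim W$.

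For uniqueness, let $W=\bigoplus_{i=1}^{m}W_{i}=\bigoplus_{j=1}^{n}W'_{j}$ be two orthogonal decompositions into $\beta$-indecomposable nondegenerate subspaces, with inclusions $\iota_{i},\iota'_{j}$ and projections $\pi_{i},\pi'_{j}$ along the respective complements. Mutual $\beta$-orthogonality of the summands makes each $\pi_{i}$ and $\pi'_{j}$ a $\beta$-self-adjoint idempotent, and on $W_{1}$ we get $\mathrm{id}_{W_{1}}=\sum_{j}\pi_{1}\pi'_{j}\iota_{1}$. Now recode $(W,\beta)$, and each $W_{i}$, as a finite-dimensional pencil of alternating $\Q$-forms; over a field such objects satisfy Krull--Schmidt, so an indecomposable $W_{1}$ has local endomorphism ring, and hence some term $\pi_{1}\pi'_{j_{0}}\iota_{1}$ is invertible. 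This forces $W'_{j_{0}}\cong W_{1}$ and, after the form-equivariant refinement of the argument (using that a nondegenerate alternating pencil is ``split'' by its own orthogonal summands), produces an automorphism of $(W,\beta)$ carrying $W_{1}$ onto $W'_{j_{0}}$, which automatically carries $W_{1}^{\perp_{\beta}}$ onto $(W'_{j_{0}})^{\perp_{\beta}}$. Composing with it, restricting to $W_{1}^{\perp_{\beta}}$ (which inherits the decompositions $\bigoplus_{i\geq2}W_{i}$ and $\bigoplus_{j\neq j_{0}}W'_{j}$), and inducting on $m$ yield $m=n$ and a single automorphism of $(W,\beta)$ realizing a permutation between the two decompositions. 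Lifting this automorphism, together with its action on $Z$, back through the Mal'cev correspondence gives the required automorphism of $G$; in particular the constituents are unique up to isomorphism.

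Everything beyond the formal exchange bookkeeping rests on two facts about the category of finite-dimensional pencils of alternating $\Q$-forms: that indecomposables have local (Fitting) endomorphism rings, and that Krull--Schmidt survives the passage to the form-equivariant setting, so that an isometry between indecomposable summands extends to a global isometry. Both are consequences of Kronecker's classification of such pencils---whose indecomposable blocks, the singular Kronecker blocks and the regular blocks attached to a single primary $\Q[t]$-module, are rigid enough that every $\beta$-self-adjoint endomorphism of one is nilpotent or invertible and partial isometries between them extend---but marshalling this structure theory, rather than the exchange step itself, is the technical heart of the proof. A point that also needs care is the very first reduction: one must check that the correspondence between central decompositions of $G$ and $\beta$-orthogonal decompositions of $W$ is faithful enough that a matching automorphism of $(W,\beta)$ (with its compatible action on $Z$) genuinely lifts to an automorphism of $G$, which uses that $\mcL$ is a class-$2$ algebra reconstructed from the datum $(W,Z,\beta)$.
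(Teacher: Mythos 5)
First, a point of comparison: the paper does not prove this statement at all --- it is quoted from \cite{GSegal/84} as a black box --- so the relevant benchmark is the source's proof, which indeed proceeds as you propose: pass to the Lie algebra, encode the commutator map as a pair (pencil) of alternating $\Q$-bilinear forms on $W=\mcL/Z(\mcL)$, and appeal to the Kronecker-type classification of such pairs. Your dictionary between central decompositions and $\beta$-orthogonal decompositions, the automatic nondegeneracy of each summand, the existence argument by induction on $\dim W$, and the lifting of an isometry of $(W,Z,\beta)$ back to an automorphism of $G$ are all fine. So the architecture is the right one.

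The genuine gap is in the uniqueness step, where you invoke two general principles that are false as stated, so they cannot carry the argument. (i) ``an indecomposable $W_{1}$ has local endomorphism ring'': the $2$-dimensional regular block (a symplectic plane with $\beta_{2}$ proportional to $\beta_{1}$) is indecomposable as an alternating pencil, yet the algebra of endomorphisms admitting a $\beta$-adjoint --- the smallest ring containing your exchange elements $\pi_{1}\pi'_{j}\iota_{1}$ --- is all of $M_{2}(\Q)$, which is not local; the $\beta$-self-adjoint elements are indeed nilpotent-or-invertible there, but they do not form a ring, so ``some term in $\sum_{j}\pi_{1}\pi'_{j}\iota_{1}=\mathrm{id}$ is invertible'' does not follow without the semiperfect-algebra-with-involution machinery. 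Note also that for a $\Q^{2}$-valued form a general endomorphism need not have a simultaneous adjoint for both components, so even setting up the involution requires care. (ii) ``Krull--Schmidt survives the passage to the form-equivariant setting, so an isometry between indecomposable summands extends to a global isometry'': this is false for forms in general. Over $\Q$ the symmetric form $x^{2}+y^{2}$ decomposes orthogonally both as $\langle 1\rangle\perp\langle 1\rangle$ (standard basis) and as $\langle 2\rangle\perp\langle 2\rangle$ (basis $(1,1),(1,-1)$), all summands indecomposable, yet no isometry of the ambient form matches the two decompositions, since no nonzero vector on the line spanned by $(1,1)$ has norm $1$. So uniqueness up to an ambient automorphism is a special feature of \emph{alternating} pencils, not a formal consequence of a hermitian Krull--Schmidt theorem; the general machinery (e.g.\ Quebbemann--Scharlau--Schulte) only gives uniqueness of isotypic components plus a transfer to forms over the residue division algebras, and one must then use that the transferred forms arising from alternating pencils (and the singular Kronecker blocks) are determined by their underlying module. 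That is exactly the content you defer to ``rigidity of the Kronecker blocks'' and ``partial isometries extend'', i.e.\ the theorem's actual substance is left unproven and is justified by a principle that has counterexamples one category over.
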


\begin{thm}[\cite{GSegal/84}, Theorem 6.3]\label{thm:D*} (i) Let $G$ be an indecomposable $\mathfrak{D}^{*}$-group
	of Hirsch length $n+2.$ Then, with respect to a suitable basis of
	$G/Z(G)$ and a suitable basis $(y_{1},\,y_{2})$ of $Z(G),$ the
	alternating bilinear map 
	\[
	\begin{aligned}\phi_{G}:G/Z(G)\times G/Z(G) & \rightarrow Z(G)\\
	(aZ(G),\,b(Z(G)) & \mapsto[a,\,b]
	\end{aligned}
	\]
	is represented by a matrix $M(\boldsymbol{y})$ as follows:
\end{thm}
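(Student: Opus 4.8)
The plan is to translate the problem into linear algebra --- the classification of pencils of alternating bilinear forms over $\Q$ --- and then invoke the Kronecker--Weierstrass theory of such pencils. First I would pass from the group to a bilinear map: because $G$ is radicable of class $2$, the commutator induces a $\Q$-bilinear alternating map $\phi_G\colon \overline{V}\times\overline{V}\to Z(G)$, where $\overline{V}:=G/Z(G)$ and $\overline{V}$, $Z(G)$ are $\Q$-vector spaces (equivalently, one works with the associated class-$2$ nilpotent Lie $\Q$-algebra), and $\phi_G$ has trivial radical (an element of $\overline{V}$ is killed by $\phi_G$ precisely when its preimage lies in $Z(G)$). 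The group $G$ is recovered up to isomorphism from the triple $(\overline{V},Z(G),\phi_G)$, and two $\mathfrak{D}^{*}$-groups are isomorphic iff their triples are; choosing a basis $(y_1,y_2)$ of $Z(G)$ turns $\phi_G$ into a pencil $\lambda M_1+\mu M_2$ of alternating matrices, and the isomorphism relation becomes equivalence under $\GL(\overline{V})\times\GL_2(\Q)$, the first factor acting by congruence on the $M_i$, the second by change of spanning pair. The point that makes the reduction work is that, since $\phi_G$ has trivial radical, a central decomposition $\{H_1,\dots,H_k\}$ of $G$ (in the sense of the notion of central decomposition recalled above) corresponds exactly to a simultaneous orthogonal direct-sum decomposition $\overline{V}=\overline{V}_1\perp\cdots\perp\overline{V}_k$ of the pencil; hence $G$ is an indecomposable $\mathfrak{D}^{*}$-group iff the pencil $(M_1,M_2)$ is indecomposable and non-degenerate, i.e. $\mathrm{rad}(M_1)\cap\mathrm{rad}(M_2)=0$.

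Next I would classify indecomposable pencils of alternating forms over $\Q$. Over any field of characteristic $\neq 2$ the category of finite-dimensional alternating-form pencils is Krull--Schmidt, and the alternating case of the Kronecker--Weierstrass theory lists its indecomposables: the singular ``minimal-index'' blocks $\mathcal{M}_{\epsilon}$ of odd dimension $2\epsilon+1$ (on which every $\lambda M_1+\mu M_2$ is degenerate, although for $\epsilon\geq 1$ the two radicals still meet in $0$); and the regular blocks $\mathcal{R}(p,e)$ of even dimension $2e\deg p$, realised on $U\oplus U^{*}$ with $U=\Q[t]/(p(t)^e)$ for $p$ irreducible, $M_1$ the canonical symplectic pairing and $M_2(u+f,u'+f')=f'(Tu)-f(Tu')$ where $T$ is multiplication by $t$ --- together with the ``eigenvalue at infinity'' variant, which lies in the same $\GL_2$-orbit as $\mathcal{R}(t,e)$. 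The proof follows the classical template: peel off the common radical; on a complement pick an element of the span whose form is non-degenerate, use it to present the pencil as one symplectic form plus a self-adjoint operator $T$, decompose $\overline{V}$ into the $\Q[t]$-primary components of $T$ and show each is orthogonally indecomposable of the stated shape; treat the singular part by Kronecker's minimal-index argument. Writing each block in its canonical basis produces the explicit matrices $M(\boldsymbol{y})$.

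Finally I would read off which blocks give genuine indecomposable $\mathfrak{D}^{*}$-groups, namely those for which $M_1,M_2$ are linearly independent so that $\dim Z(G)=2$: these are the $\mathcal{M}_{\epsilon}$ with $\epsilon\geq 1$ and the $\mathcal{R}(p,e)$ with $\deg(p^e)\geq 2$, the only block with one-dimensional span being the $2\times 2$ block $\mathcal{R}(t,1)$, which accounts for the exceptional $\mathrm{Heis}(\Q)\times\Q$ (and the abelian $\Q^{2}$ is the trivial case). Their canonical matrices are exactly the list in the statement, and the uniqueness of the central decomposition up to an automorphism of $G$ --- the companion statement \cite{GSegal/84}, Theorem~6.2 --- is the Krull--Schmidt property of step two transported back through the dictionary of the first step.

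The main obstacle is step two carried out over $\Q$ rather than over $\mathbb{C}$ or $\mathbb{R}$: the regular indecomposables are governed by powers of \emph{irreducible} polynomials, so one cannot diagonalise the operator $T$ and must instead argue directly with the $\Q[t]$-module structure and perform the Galois descent showing that each primary component remains $M_1$-orthogonally indecomposable and splits off over $\Q$; verifying that Krull--Schmidt and the canonical normal forms survive this descent, and that the $\GL_2$-action on the span does not identify blocks one would naively expect to be distinct (in particular the ``eigenvalue $0$'' and ``eigenvalue $\infty$'' blocks), is where the real care is required.
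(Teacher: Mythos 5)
This statement is not proved in the paper at all: it is quoted verbatim (with its companion Theorem 6.2) from \cite{GSegal/84} precisely so that the normal forms $M_{0}^{r}(\boldsymbol{y})$ and $M_{(f,e)}(\boldsymbol{y})$ can be used in the $d'=2$ case analysis, so there is no in-paper argument to compare against. Your outline --- passing to the triple $(G/Z(G),Z(G),\phi_{G})$, identifying central decompositions with orthogonal decompositions of the pencil $y_{1}M_{1}+y_{2}M_{2}$ of alternating forms, and then invoking the Kronecker--Weierstrass classification of such pencils over $\Q$ (odd-dimensional singular blocks plus regular blocks indexed by powers of irreducible polynomials, with the $\GL_{2}(\Q)$-action on $Z(G)$ absorbing the eigenvalue-at-infinity block) --- is essentially the route of the original Grunewald--Segal proof and, modulo the Galois-descent care you yourself flag, is sound.
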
 
\begin{itemize}
	\item \textit{$n=2r+1.$ 
		\[
		M(\boldsymbol{y})=M_{0}^{r}(\boldsymbol{y})=\left(\begin{array}{cc}
		0 & B\\
		-B^{t} & 0
		\end{array}\right),
		\]
		where 
		\[
		B=B(\boldsymbol{y})=\left(\begin{array}{ccccc}
		y_{2} & 0 & 0 & \ldots & 0\\
		y_{1} & y_{2} & 0 & \ldots & 0\\
		0 & y_{1} & y_{2} & \ldots & 0\\
		0 & 0 & y_{1} & \ldots & 0\\
		0 &  &  & \vdots\\
		0 & 0 & 0 & \ldots & y_{2}\\
		0 & 0 & 0 & \ldots & y_{1}
		\end{array}\right)_{(r+1)\times r}
		\]
	} 
	\item \textit{$n=2r.$ 
		\[
		M(\boldsymbol{y})=M_{(f,\,e)}(\boldsymbol{y})=\left(\begin{array}{cc}
		0 & B\\
		-B^{t} & 0
		\end{array}\right),
		\]
		where 
		\[
		B=B(\boldsymbol{y})=\left(\begin{array}{cccccc}
		y_{1}+a_{1}y_{2} & y_{2} & 0 & 0 & \ldots & 0\\
		-a_{2}y_{2} & y_{1} & y_{2} & 0 & \ldots & 0\\
		a_{3}y_{2} & 0 & y_{1} & y_{2} & \ldots & 0\\
		-a_{4}y_{2} & 0 & 0 & y_{1} & \ldots & 0\\
		\vdots &  &  &  & \vdots\\
		(-1)^{r}a_{r-1}y_{2} & 0 & 0 & 0 & \ldots & y_{2}\\
		(-1)^{r+1}a_{r}y_{2} & 0 & 0 & 0 & \ldots & y_{1}
		\end{array}\right)_{r\times r}
		\]
		and $\text{det}(B(\boldsymbol{y}))=g(y_{1},\,y_{2})=y_{1}^{r}+a_{1}y_{1}^{r-1}y_{2}+\cdots+a_{r}y_{2}^{r}\in\mathbb{Q}[y_{1},\,y_{2}]$
		is such that $g(y_{1},\,1)$ is primary, say $g=f^{e}$ for $f$ irreducible
		over $\mathbb{Q},$ $e\in\mathbb{N}.$} 
\end{itemize}
\textit{(ii) If $G$ is any $\mathfrak{D}^{*}$-group, then with respect
	to a suitable basis as above, $\phi_{G}$ is represented by the diagonal
	sum of matrices like $M(\boldsymbol{y})$ above. }

Since we assume $G'=Z(G),$ these are precisely the cases where $d'=2.$
We start from $(d,d')=(3,2)$. For (3,2), Theorem \ref{thm:D*} implies that $G$ has to be an indecomposable $\mathfrak{D}^{*}$-group. Hence \cite[Proposition 2]{Voll/04} gives
\[
\zeta_{G,\,p}^{\vartriangleleft}(s)=\zeta_{\mathbb{Z}_{p}^{3}}(s)\zeta_{p}(5s-6)\zeta_{p}(3s-4)W(p,\,p^{-s})
\]
where $W(X,\,Y)=1+X^{3}Y^{3}.$

For $(d,d')=(4,2)$, if $G$ is an indecomposable $\mathfrak{D}^{*}$- group then
with respect to a suitable basis of $G/G'$ and a suitable basis $(y_{1},\,y_{2})$
of $G',$ we have
\[
M(\boldsymbol{y})=\left(\begin{array}{cccc}
0 & 0 & y_{1}+a_{1}y_{2} & y_{2}\\
0 & 0 & -a_{2}y_{2} & y_{1}\\
-y_{1}-a_{1}y_{2} & a_{2}y_{2} & 0 & 0\\
-y_{2} & -y_{1} & 0 & 0
\end{array}\right)
\]
where $\sqrt{\textrm{det}(M(\boldsymbol{y}))}=g(y_{1},\,y_{2})=y_{1}^{2}+a_{1}y_{1}y_{2}+a_{2}y_{2}^{2}\in\mathbb{Q}[y_{1},\,y_{2}]$
is such that $g(y_{1},\,1)$ is primary, say $g=f^{e}$ for $f$ irreducible
over $\mathbb{Q},\,e\in\mathbb{N}.$ Then if we let $p$ be a prime
unramified in $\mathbb{Q}[t]/f(t),$ \cite[Proposition 3]{Voll/04} gives
\[
\zeta_{G,\,p}^{\vartriangleleft}(s)=\zeta_{\mathbb{Z}_{p}^{4}}(s)\zeta_{p}(6s-8)\zeta_{p}(3s-5)\zeta_{p}(5s-5)\zeta_{p}(3es-(5e-1))(P_{1}(p,\,p^{-s})-n_{f}(p)P_{2}(p,\,p^{-s}))
\]
where $n_{f}(p)$ is the number of distinct linear factors in $\overline{f(t)}$
and 
\begin{align*}
P_{1}(X,\,Y) & =(1-X^{5}Y^{3})(1+X^{4}Y^{5})(1-X^{5e-1}Y^{3e})\\
P_{2}(X,\,Y) & =(1-Y)(1+Y)X^{4}Y^{3}(1-X^{5e}Y^{3e}).
\end{align*}
Now, as $f$ is only either quadratic or linear, $e$ is either 1
or 2, and $n_{p,\,f}$ is always PORC by Theorem \ref{thm:quad}. Hence $\zeta_{G,\,p}^{\vartriangleleft}(s)$
is RFORC.

If it is decomposable, then the only way to decompose such $G$ is
to decompose it into two groups $H_{1}$ and $H_{2}.$ With respect
to a suitable basis we can have 
\[
M(\boldsymbol{y})=\left(\begin{array}{cccc}
0 & y_{1}+a_{1}y_{2} & 0 & 0\\
-y_{1}-a_{1}y_{2} & 0 & 0 & 0\\
0 & 0 & 0 & y_{1}+a_{2}y_{2}\\
0 & 0 & -y_{1}-a_{2}y_{2} & 0
\end{array}\right)
\]
where 
\[
M_{H_{1}}(\boldsymbol{y})=\left(\begin{array}{cc}
0 & y_{1}+a_{1}y_{2}\\
-y_{1}-a_{1}y_{2} & 0
\end{array}\right)
\]
and 
\[
M_{H_{2}}(\boldsymbol{y})=\left(\begin{array}{cc}
0 & y_{1}+a_{1}y_{2}\\
-y_{1}-a_{1}y_{2} & 0
\end{array}\right).
\]
Thus there are irreducible polynomial $f_{1}(t)=t+a_{1},\,f_{2}(t)=t+a_{2}\in\mathbb{Q}[t]$
and rational function $W_{I}(X,\,Y),$ $I\subseteq\{1,\,2\}$ such
that for almost all primes $p$ 
\[
\zeta_{G,\,p}^{\vartriangleleft}(s)=\sum_{I\subseteq\{1,\,2\}}c_{p,\,I}W_{I}(p,\,p^{-s}),
\]
where 
\[
c_{p,\,I}=\left|\left\{ x\in\mathbb{P}^{1}(\mathbb{F}_{p})\mid f_{i}(x)\equiv0\;(\text{mod}\;p)\;\textrm{if and only if }i\in I\right\} \right|.
\]
Since both $f_{1}$ and $f_{2}$ are linear, we can see that $\zeta_{G,\,p}^{\vartriangleleft}(s)$
is RFORC.

For $(d,d')=(5,2)$, again if it is indecomposable \cite[Proposition 2]{Voll/04} gives
\[
\zeta_{G,\,p}^{\vartriangleleft}(s)=\zeta_{\mathbb{Z}_{p}^{5}}(s)\zeta_{p}(7s-10)\zeta_{p}(5s-6)W(p,\,p^{-s})
\]
where $W(X,\,Y)=1+X^{5}Y^{5}.$

If it is decomposable, then again it can be decomposed into $H_{1}$
and $H_{2}$ where with respect to a suitable bases we have 
\[
M(\boldsymbol{y})=\left(\begin{array}{ccccc}
0 & y_{1}+a_{1}y_{2} & 0 & 0 & 0\\
-y_{1}-a_{1}y_{2} & 0 & 0 & 0 & 0\\
0 & 0 & 0 & 0 & y_{2}\\
0 & 0 & 0 & 0 & y_{1}\\
0 & 0 & -y_{2} & -y_{1} & 0
\end{array}\right)
\]
where 
\[
M_{H_{1}}(\boldsymbol{y})=\left(\begin{array}{cc}
0 & y_{1}+a_{1}y_{2}\\
-y_{1}-a_{1}y_{2} & 0
\end{array}\right)
\]
and 
\[
M_{H_{2}}(\boldsymbol{y})=\left(\begin{array}{ccc}
0 & 0 & y_{2}\\
0 & 0 & y_{1}\\
-y_{2} & -y_{1} & 0
\end{array}\right).
\]
Now we get nothing from $H_{2}$ and just one irreducible linear polynomial
$f_{1}(t)=t+a_{1}\in\mathbb{Q}[t].$ Hence we have 
\[
\zeta_{G,\,p}^{\vartriangleleft}(s)=W_{1}(p,\,p^{-s})+c_{p}W_{2}(p,\,p^{-s})
\]
where $W_{1}(X,\,Y),\,W_{2}(X,\,Y)$ are rational functions and 
\[
c_{p}=\left\{ x\in\mathbb{P}^{1}(\mathbb{F}_{p})\mid f_{1}(x)\equiv x+a_{1}\equiv0\,(\text{mod}\;p)\right\} =1
\]
for all $p$, which is clearly PORC.

\subsection{$d'=3$}

For $(d,d')=(3,3)$, any (3,3)-group $G$ would have a presentation 
\[
\left\langle x_{1},\,x_{2},\,x_{3},\,y_{1},\,y_{2},\,y_{3}\mid[x_{1},\,x_{2}]=f_{1}(y),\,[x_{1},\,x_{3}]=f_{2}(y),\,[x_{2},\,x_{3}]=f_{3}(y)\right\rangle 
\]
where $f_{1},\,f_{2}$ and $f_{3}$ are linear polynomials in $\mathbb{Z}[y_{1},\,y_{2},\,y_{3}].$
For now, consider a corresponding Lie ring $L=L(G)=G/G'\oplus G'.$
Now, as $Z(L)=L',$ it implies 
\[
\left\langle y_{1},\,y_{2},\,y_{3}\right\rangle=\left\langle f_{1}(y),\,f_{2}(y),\,f_{3}(y)\right\rangle.
\]
Since $y_{1},\,y_{2}$ and $y_{3}$ are linearly independent, the $f_{i}$'s
are linearly independent too. Hence with a suitable base exchange
we can see that there exists a ring $R\cong L$ with a presentation
\[
\left\langle x'_{1},\,x'_{2},\,x'_{3},\,y_{1},\,y_{2},\,y_{3}\mid[x'_{1},\,x'_{2}]=y_{1},\,[x'_{1},\,x'_{3}]=y_{2},\,[x'_{2},\,x'_{3}]=y_{3}\right\rangle .
\]
In fact $R$ is a free class-2 nilpotent Lie ring on 3 generators,
and we know its local normal zeta function are RFORC (in fact they
are uniform, \cite{duSWoodward/08}, \cite{GSS/88}). As we have 
\[
\zeta_{R,\,p}^{\vartriangleleft}(s)=\zeta_{L,\,p}^{\vartriangleleft}(s)=\zeta_{G,\,p}^{\vartriangleleft}(s),
\]
this covers the case $(d,d')=(3,3)$.

Finally, suppose $G$ is a (4,3)-group. The Pfaffian hypersurface
$\mathcal{P}_{G}$ of a (4,3)-group $G$ would be a quadric form $w$
in $\mathbb{P}^{2}$ (i.e. conic). As long as $w$ is not identically
zero, smooth and contains no line, we will have 
\[
\zeta_{G,\,p}^{\vartriangleleft}(s)=W_{1}(p,\,p^{-s})+n_{w}(p)W_{2}(p,\,p^{-s})
\]
where $n_{w}(p)$ is PORC by Theorem \ref{thm:quad}. This concludes the proof
of Theorem \ref{thm:RFORC7}.

\begin{rem}
	Note that the condition on Pfaffian hypersurface associated to $G$ being smooth and containing no lines is in fact only required for $(d,d')=(4,3)$ case. We know by \cite{Beauville/00} and \cite{Voll/05} that this condition holds \textit{generically} for $d'\leq6$, which includes our case.
\end{rem}
\section{$\mathfrak{T}_{2}$-groups of Hirsch length 8}

In this section we present two different $\mathfrak{T}_{2}$-groups
$G_{np8}$ and $G_{np8'}$ of Hirsch length 8 whose local normal zeta
functions are not RFORC. The name $G_{np8}$ and $G_{np8'}$ come from "Non-PORC of Hirsch length 8".

\begin{thm} \label{thm:gnp8} Let $G_{np8}$ be a $\mathfrak{T_{2}}$-group
	with the associated Lie ring 
	
	\begin{align*}
	L(G_{np8})=\left\langle \begin{aligned}x_{1},\,x_{2},\,x_{3},\,x_{4},\,x_{5},\,y_{1},\,y_{2},\,y_{3}\mid[x_{1},\,x_{4}]=[x_{2},\,x_{5}]=y_{3},\\
	2[x_{1},\,x_{5}]=[x_{3},\,x_{4}]=2y_{1},\,[x_{2},\,x_{4}]=[x_{3},\,x_{5}]=y_{2}
	\end{aligned}
	\right\rangle 
	\end{align*}
	Then $\zeta_{G_{np8},\,p}^{\vartriangleleft}(s)$ is not RFORC.\end{thm}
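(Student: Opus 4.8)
The plan is to reduce, via the Mal'cev correspondence, to showing that the local \emph{ideal} zeta function $\zeta_{L,p}^{\vartriangleleft}(s)$ of $L=L(G_{np8})$ fails to be RFORC, where $p$ ranges over odd primes (finitely many $p$ are discarded anyway). For $p$ odd the relation $2[x_1,x_5]=2y_1$ reads $[x_1,x_5]=y_1$, so $Z(L_p)=L_p'$ and the commutator form of $L_p$ is encoded by the alternating matrix of $\Z$-linear forms
\[
R(\boldsymbol{y})=\begin{pmatrix}0 & B(\boldsymbol{y})\\ -B(\boldsymbol{y})^{t} & 0\end{pmatrix},\qquad
B(\boldsymbol{y})=\begin{pmatrix} y_3 & y_1\\ y_2 & y_3\\ 2y_1 & y_2\end{pmatrix},
\]
with rows of $B$ indexed by $x_1,x_2,x_3$ and columns by $x_4,x_5$. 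I would first record that the degeneracy locus $\{\boldsymbol{y}\in\mathbb{P}^{2}:\rk B(\boldsymbol{y})\le1\}$ is the common zero set of the $2\times2$ minors $y_3^2-y_1y_2$, $y_2y_3-2y_1^2$, $y_2^2-2y_1y_3$, hence equals $\{[1:b^2:b]:b^3=2\}$ and has exactly $n(p)$ points over $\Fp$; the same holds for the "covector" version $\{\boldsymbol{w}:\rk B(\boldsymbol{w})\le1\}$, and $n(p)$ is not PORC by Corollary \ref{cor:noofroot}.

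\emph{The coefficient of $p^{-3s}$.} Since $Z(L_p)=L_p'$, the enumeration of ideals through sublattices of the centre used in the proof of Lemma \ref{lem:centre.ok} gives
\[
\zeta_{L,p}^{\vartriangleleft}(s)=\zeta_{\Zp^{5}}(s)\sum_{\Lambda'\le\Zp^{3}}|\Zp^{3}:\Lambda'|^{5-s}\,|L_p:X(\Lambda')|^{-s},
\]
where $X(\Lambda')/L_p'=\{a\in\Zp^{5}:\beta(a,\cdot)\equiv0\bmod\Lambda'\}$ with $\beta$ the commutator; in particular $|L_p:X(\Lambda')|$ depends only on $R(\boldsymbol{y})\bmod\Lambda'$. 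If $\Lambda'$ has colength $p^{e}$ then $|L_p:X(\Lambda')|=p^{v}$ for some $v=v(\Lambda')\ge0$, so each summand is the single monomial $p^{5e}\,p^{-(e+v)s}$. Writing the cotype of $\Lambda'$ as $\mu=(\mu_1\ge\mu_2\ge\mu_3)$, I would check: (i) $v=0$ exactly when $\Lambda'=\Zp^{3}$, and $v\ge1$ otherwise; (ii) if $\mu=(1,0,0)$ then $\Lambda'$ corresponds to a point $\bar w\in\mathbb{P}^{2}(\Fp)$ and $v=\rk\overline{R(\bar w)}=2\rk\overline{B(\bar w)}\in\{2,4\}$, equal to $2$ precisely on the degeneracy locus; (iii) for every other proper cotype one has $e+v\ge4$, using that the nonzero elementary divisors of an alternating matrix over $\Zp$ occur in equal pairs, and that $\boldsymbol{y}\mapsto R(\boldsymbol{y})$ realises $\Zp^{3}$ as a direct summand of the module of alternating $5\times5$ matrices (so $R(w)$ is primitive whenever $w$ is). Therefore the $p^{-ks}$-coefficient of $\sum_{\Lambda'}(\cdots)$ equals $1$ for $k=0$, equals $0$ for $k=1,2$, and equals $n(p)\,p^{5}$ for $k=3$ (the $n(p)$ corank-$1$ lattices pointing along the degeneracy locus, each contributing $p^{5e}=p^{5}$). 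Multiplying by $\zeta_{\Zp^{5}}(s)$,
\[
a_{p^{3}}^{\vartriangleleft}(L)=\bigl[p^{-3s}\bigr]\,\zeta_{L,p}^{\vartriangleleft}(s)=a_{p^{3}}^{\le}(\Zp^{5})+n(p)\,p^{5},
\]
where $a_{p^{3}}^{\le}(\Zp^{5})$ — the $t^{3}$-coefficient of $\prod_{i=0}^{4}(1-p^{i}t)^{-1}$ — is a polynomial in $p$; the same computation shows $a_{p}^{\vartriangleleft}(L)$ and $a_{p^{2}}^{\vartriangleleft}(L)$ are polynomials in $p$.

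\emph{Conclusion.} If $\zeta_{G_{np8}}^{\vartriangleleft}(s)$ were RFORC, then for $p$ in a fixed residue class modulo the relevant $N$ and $p$ large one would have $a_{p^{3}}^{\vartriangleleft}(G_{np8})=a_{p^{3}}^{\vartriangleleft}(L)=[t^{3}]W_i(p,t)=\rho_i(p)$ for a rational function $\rho_i$ depending only on $i$. Subtracting the polynomial $a_{p^{3}}^{\le}(\Zp^{5})$ and dividing by $p^{5}$ would express $n(p)$, on that residue class and for all large $p$, as a fixed rational function of $p$; since $n(p)$ is bounded, such a function is eventually constant on the class, so $n(p)$ would be PORC, contradicting Corollary \ref{cor:noofroot}. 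Hence $\zeta_{G_{np8},p}^{\vartriangleleft}(s)$ is not RFORC.

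\emph{Main obstacle.} The crux is step (iii): one must verify that every $\Lambda'$ of colength $\ge p^{2}$ (and every cotype other than $(1,0,0)$) contributes only to the coefficients of $p^{-ks}$ with $k\ge4$, so that the unique $n(p)$-dependent contribution to the $p^{-3s}$-coefficient is that of the corank-$1$ lattices along the degeneracy locus. This rests on the structural facts about alternating matrices over $\Zp$ above together with a short case analysis of $v(\Lambda')$; everything else is the standard Grunewald--Segal--Smith/Voll machinery plus Corollary \ref{cor:noofroot}. Alternatively one could push the stratification through all cotypes and obtain the stronger conclusion $\zeta_{L,p}^{\vartriangleleft}(s)=W_1(p,p^{-s})+n(p)\,W_2(p,p^{-s})$ with $W_2\not\equiv0$, at the cost of considerably more computation.
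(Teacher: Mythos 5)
Your argument is correct, but it takes a genuinely different route from the paper. The paper follows Voll's building-theoretic method: it decomposes the lattice sum $A(p,T)$ over sector-families of the affine building $\Delta_3$, manually identifies the degeneracy locus $V=\{(1,k^2,k):k^3=2\}$ (since $d=5$ is odd there is no Pfaffian), computes $A_{\textrm{off/off}}$ and $A_{\textrm{sm.pt/off}}$ in closed form, and obtains the explicit expression $\zeta_{G_{np8},p}^{\vartriangleleft}(s)=W_1$-part$\,+\,|V(\Fp)|\cdot W_2$-part, from which non-RFORC follows because $|V(\Fp)|=n(p)$ is not PORC. You instead extract a single Dirichlet coefficient from the Grunewald--Segal--Smith formula $\zeta_{L,p}^{\vartriangleleft}(s)=\zeta_{\Zp^5}(s)\sum_{\Lambda'}|L':\Lambda'|^{5-s}|L:X(\Lambda')|^{-s}$, showing $a_{p^3}^{\vartriangleleft}(L)=a_{p^3}^{\le}(\Zp^5)+n(p)p^5$, and deduce non-RFORC from non-PORCness of $n(p)$. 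Your coefficient is consistent with the paper's closed formula (the $|V|$-dependent part there begins with $p^5T^3$, coming from $\zeta_p(3s-5)$ and $W_2=(1-Y)(1+Y)X^5Y^3(1+X^{11}Y^7)$), so the two computations agree. Your route is more elementary and avoids the sector-family bookkeeping, at the price of not producing the explicit local factor (which the paper also uses to remark that the zeta function is still finitely uniform). Two small refinements: your flagged step (iii) is easier than you fear, since $\Lambda_1'\subseteq\Lambda_2'$ implies $X(\Lambda_1')\subseteq X(\Lambda_2')$ and every proper $\Lambda'$ lies in some index-$p$ sublattice, so $v(\Lambda')\ge2$ for all proper $\Lambda'$ and hence $e\ge2$ already forces $e+v\ge4$; and in the final step the reason the rational function $\rho_i(p)$ forces eventual constancy is not mere boundedness of $n(p)$ but that $n(p)$ takes only finitely many values, so some value is attained infinitely often on the class and the rational function must be identically that value --- after which you contradict Corollary \ref{cor:noofroot} exactly as the paper does.
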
 

\begin{proof} We calculate $\zeta_{G_{np8},\,p}^{\vartriangleleft}(s)$
	by computing the local ideal zeta functions of the corresponding Lie
	ring 
	\[
	L(G_{np8})=\left\langle x_{1},\,\ldots,\,x_{5},\,y_{1},\,y_{2},\,y_{3}\mid[x_{i},\,x_{j}]=M(\boldsymbol{y})_{ij}\right\rangle 
	\]
	where 
	\[
	M(\boldsymbol{y})=\left(\begin{array}{ccccc}
	0 & 0 & 0 & y_{3} & y_{1}\\
	0 & 0 & 0 & y_{2} & y_{3}\\
	0 & 0 & 0 & 2y_{1} & y_{2}\\
	-y_{3} & -y_{2} & 2y_{1} & 0 & 0\\
	-y_{1} & -y_{3} & -y_{2} & 0 & 0
	\end{array}\right).
	\]
	Now this is exactly the case of \cite[Theorem 1]{Voll/04} where $d'=3$,
	except that our $d=5$ is odd. Hence we need a crucial modification.
	
	Let $L$ be a Lie ring additively isomorphic to $\Z^{d+d'}$. As shown earlier, we get
	\[
	\begin{aligned}\zeta_{L,\,p}^{\vartriangleleft}(s) & =\zeta_{\mathbb{Z}_{p}^{d}}(s)\sum_{\Lambda'\subseteq L'}\left|L':\Lambda'\right|^{d-s}\left|L:X(\Lambda')\right|^{-s}\\
	& =\zeta_{\mathbb{Z}_{p}^{d}}(s)\zeta_{p}((d+d')s-dd')A(p,\,p^{-s})
	\end{aligned}
	\]
	where 
	\[
	A(p,\,p^{-s})=\sum_{\Lambda'\subseteq L',\,\Lambda'\,\text{maximal}}\left|L':\Lambda'\right|^{d-s}\left|L:X(\Lambda')\right|^{-s}.
	\]
	Recall from
	\cite{Voll/04} the definition of the weight functions 
	\[
	\begin{aligned}w(\Lambda') & :=\log_{p}\left(\left|L':\Lambda'\right|\right)\\
	w'(\Lambda') & :=w(\Lambda')+\log_{p}\left(\left|L:X(\Lambda')\right|\right).
	\end{aligned}
	\]
	Put $T:=p^{-s}$ we get 
	\[
	A(p,\,T)=\sum_{\Lambda'\subseteq L',\,\text{maximal}}p^{dw(\Lambda')}T^{w'(\Lambda')}.
	\]
	Let us compute this generating function associated to the vertices
	of the building $\Delta_{3}$. First
	we put to use the decomposition of $\Delta_{3}$ into sector-families
	$\mathcal{S}_{F},\,F\in\mathcal{F}(p,\,3).$ We have 
	\[
	A(p,\,T)=\sum_{F\in\mathcal{F}(p,\,3)}A(p,\,T,\,F)
	\]
	where 
	\begin{align*}
	A(p,\,T,\,F):= & \frac{1}{\binom{3}{2}_{p}\binom{2}{1}_{p}}+\frac{1}{\binom{2}{1}_{p}}\sum_{[\mathbb{Z}_{p}^{3}]\neq[\Lambda']\in\partial S_{F}}p^{dw[\Lambda']}T^{w'[\Lambda']}\\
	& +\sum_{[\Lambda']\in S_{F}^{\circ}}p^{dw[\Lambda']}T^{w'[\Lambda']}
	\end{align*}
	(we use the notation $S_{F},\,\partial S_{F},\,S_{F}^{\circ}$ to
	denote the sector family indexed by the flag $F,$ its boundary and
	interior, respectively).
	
	Now, for a given lattice $\Lambda'$ of type $(p^{s+t},\,p^{t},\,1)$
	where $s,\,t\geq0$ there is a unique coset 
	\[
	\alpha G_{v}\in\text{SL}_{3}(\mathbb{Z}_{p})/G_{v},
	\]
	where $G_{v}:=\textrm{Stab}_{\text{SL}_{3}(\mathbb{Z}_{p})}\left(\mathbb{Z}_{p}^{3}\cdot\textrm{diag}(p^{s+t},\,p^{t},\,1)\right),$
	such that the admissibility condition becomes 
	\begin{align}
	\Lambda_{ab}M(\alpha^{1}) & \equiv0\;\mod\;p^{s+t}\label{st}\\
	\Lambda_{ab}M(\alpha^{2}) & \equiv0\;\mod\;p^{t},\label{t}
	\end{align}
	where we denote by $\alpha^{j}$ the $j$th column of the matrix $\alpha.$
	We now have to analyse the elementary divisors of the system of linear
	equations \eqref{st} and \eqref{t}. They only depend on how the flag $F$
	meets the degeneracy locus of the matrix $M(\boldsymbol{y}).$ Here
	comes the tricky part. If $d=2r,$ the degeneracy locus of $M(\boldsymbol{y})$
	is simply the Pfaffian hypersurface. This is why we have 
	\[
	\zeta_{G,\,p}^{\vartriangleleft}(s)=W_{1}(p,\,p^{-s})+n_{\mathcal{P}_{G}}(p)W_{2}(p,\,p^{-s})
	\]
	in \cite[Theorem. 1]{Voll/04}. However, when $d=2r+1$ (like our case),
	Pf$\left(M(\boldsymbol{y})\right):=\sqrt{\text{det}\left(M(\boldsymbol{y})\right)}$
	is identically zero so there is no Pfaffian hypersurface. So we have
	to manually identify and analyse the degeneracy locus of $M(\boldsymbol{y})$.
	In this case the degeneracy locus
	of 
	\[
	M(\boldsymbol{y})=\left(\begin{array}{ccccc}
	0 & 0 & 0 & y_{3} & y_{1}\\
	0 & 0 & 0 & y_{2} & y_{3}\\
	0 & 0 & 0 & 2y_{1} & y_{2}\\
	-y_{3} & -y_{2} & 2y_{1} & 0 & 0\\
	-y_{1} & -y_{3} & -y_{2} & 0 & 0
	\end{array}\right)
	\]
	is 
	\[
	V=\{\boldsymbol{y}\in\mathbb{P}^{2}(\mathbb{Q})\mid2y_{1}^{2}-y_{2}y_{3}=y_{2}^{2}-2y_{1}y_{3}=y_{3}^{2}-y_{1}y_{2}=0\},
	\]
	which is a zero-dimensional
	smooth subvariety in $\mathbb{P}^{2},$ namely 
	\[
	V=\{(1,\,k^{2},\,k)\mid\,k^{3}=2\}.
	\]
	Since we have identified the degeneracy locus, we compute $A(p,\,T,\,F)$ for 
	\begin{enumerate}
		\item $M(\overline{\alpha^{1})}\;\text{maximal, }$ $A(p,\,T,\,F)=:A_{\textrm{off/off}}(p,\,T),$ 
		\item $M(\overline{\alpha^{1})}\;\text{not maximal, }$ $A(p,\,T,\,F)=:A_{\textrm{sm.pt/off}}(p,\,T).$ 
	\end{enumerate}
	Start with Case (1). $M(\alpha^{1})$ always has a $(d-1)$-minor
	which is a $p$-adic unit, so
	\[
	w'([\Lambda'])=s+3t+(d-1)(s+t)=ds+(d+2)t.
	\]
	The boundary of a sector-family in $\triangle_{3}$ falls into three
	components: the root vertex, (maximal) lattices of type $(p^{s},\,1,\,1),$
	and lattices of type $(p^{t},\,p^{t},\,1).$ For $s,\,t\geq1$ there
	are $p^{2(s-1)},\,p^{2(t-1)}$ and $p^{s-1}p^{t-1}p^{s+t-1}=p^{2s+2t-3}$
	lattices in $S_{F}$ of type $(p^{s},\,1,\,1),$ $(p^{t},\,p^{t},\,1)$
	and $(p^{s+t},\,p^{t},\,1)$ respectively. Therefore we get 
	\begin{align*}
	A_{\textrm{off/off}}(p,\,T):= & \frac{1}{\binom{3}{2}_{p}\binom{2}{1}_{p}}+\frac{1}{\binom{2}{1}_{p}}\left(\sum_{s\geq1}p^{(d+2)s-2}T^{ds}+\sum_{t\geq1}p^{(2d+2)t-2}T^{(d+2)t}\right)\\
	& +\sum_{s,\,t\geq1}p^{((d+2)s+(2d+2)t-3)}T^{ds+(d+2)t}\\
	= & \frac{1+p^{d}T^{d}+p^{d+1}T^{d}+p^{2d}T^{d+2}+p^{2d+1}T^{d+2}+p^{3d+1}T^{2d+2}}{\binom{3}{2}_{p}\binom{2}{1}_{p}\left(1-p^{2d+2}T^{d+2}\right)\left(1-p^{d+2}T^{d}\right)}.
	\end{align*}
	For case (2), the matrix $M(\alpha^{1})$
	will always have a $(d-3)-$minor which is a $p$-adic unit. So by
	the same logic, we can choose affine local coordinates $(x,\,y,\,1)$
	around $\alpha^{1}\in\mathbb{P}^{2}(\mathbb{Z}_{p})$ such that 
	\[
	w'([\Lambda'])=ds+(d+2)t-2\textrm{min}(s,\,v_{p}(x),\,v_{p}(y))
	\]
	where $x,\,y\in p\mathbb{Z}/(p^{s}).$ Start with 
	\[
	\sum_{[\mathbb{Z}_{p}^{3}]\neq[\Lambda']\in\partial S_{F}}p^{dw[\Lambda']}T^{w'[\Lambda']}
	\]
	consisting of lattices of type $(p^{s},\,1,\,1),$ $s\geq1.$ The
	map 
	\[
	[\Lambda']\mapsto p^{dw[\Lambda']}T^{w'[\Lambda']}
	\]
	factorises over the set 
	\[
	N:=\{(a,\,b,\,c)\in\mathbb{N}_{>0}^{3}\mid a\geq b,\,a\geq c\},
	\]
	which we view as the intersection of $\mathbb{N}_{>0}^{3}$ with some
	closed polyhedral cone $C$ in $\mathbb{R}_{>0}^{3}$ as $\psi\phi$
	where 
	\begin{align}
	\phi:[\Lambda'] & \mapsto(s,\,v_{p}(x),\,v_{p}(y))\\
	\psi:(a,\,b,\,c) & \mapsto p^{da}T^{da-2\textrm{min}(a,\,b)}.
	\end{align}
	Note that 
	\begin{equation}
	\left|\phi^{-1}(a,\,b,\,c)\right|=\begin{cases}
	1 & \textrm{if}\;a=b=c,\\
	(1-p^{-1})p^{a-b} & \textrm{if}\;a>b,\,a=c,\\
	(1-p^{-1})p^{a-c} & \textrm{if}\;a>c,\,a=b,\\
	(1-p^{-1})^{2}p^{2a-b-c} & \textrm{if}\;a>b,\,a>c.
	\end{cases}
	\end{equation}
	We decompose the cone $N$ into sub-cones $N_{j}$ on which the values
	$\left|\phi^{-1}(a,\,b,\,c)\right|\psi(a,\,b,\,c)$ are easier to
	sum over. We choose the decomposition 
	\begin{align*}
	N & =N_{0}+N_{1}+N_{2}+N_{3}+N_{4}+N_{5}\\
	N_{0} & :=\{(a,\,b,\,c)\in N\mid a=b=c\geq1\},\\
	N_{1} & :=\{(a,\,b,\,c)\in N\mid a=c>b\geq1\},\\
	N_{2} & :=\{(a,\,b,\,c)\in N\mid a=b>c\geq1\},\\
	N_{3} & :=\{(a,\,b,\,c)\in N\mid a>c>b\geq1\},\\
	N_{4} & :=\{(a,\,b,\,c)\in N\mid a>b>c\geq1\},\\
	N_{5} & :=\{(a,\,b,\,c)\in N\mid a>b=c\geq1\}.
	\end{align*}
	Let $n_{j}:=\textrm{dim}(N_{j}).$ Table 4.1 records the generating
	functions $F_{j}(X,\,Y,\,Z)$ together with the integers $n_{j}$
	and Laurent monomials $m_{jX}(p,\,T),\,m_{jY}(p,\,T),\,m_{jZ}(p,\,T).$
	
	\begin{table}\label{tab:gen.func}
		 \centering \protect\protect\caption{}
		\begin{tabular}{|c|c|c|c|c|c|}
			\hline 
			$j$  & $n_{j}$  & $F_{j}(X,\,Y,\,Z)$  & $m_{jX}(p,\,T)$  & $m_{jY}(p,\,T)$  & $m_{jZ}(p,\,T)$\tabularnewline
			\hline 
			0  & 1  & $\frac{XYZ}{1-XYZ}$  & $p^{d}T^{d}$  & $T^{-2}$  & 1\tabularnewline
			\hline 
			1  & 2  & $\frac{X^{2}YZ^{2}}{(1-XYZ)(1-XZ)}$  & $p^{d+1}T^{d}$  & $p^{-1}T^{-2}$  & 1\tabularnewline
			\hline 
			2  & 2  & $\frac{X^{2}Y^{2}Z}{(1-XYZ)(1-XY)}$  & $p^{d+1}T^{d}$  & $1$  & $p^{-1}T^{-2}$\tabularnewline
			\hline 
			3  & 3  & $\frac{X^{3}YZ^{2}}{(1-XYZ)(1-XZ)(1-X)}$  & $p^{d+2}T^{d}$  & $p^{-1}T^{-2}$  & $p^{-1}$\tabularnewline
			\hline 
			4  & 3  & $\frac{X^{3}Y^{2}Z}{(1-XYZ)(1-XY)(1-X)}$  & $p^{d+2}T^{d}$  & $p^{-1}$  & $p^{-1}T^{-2}$\tabularnewline
			\hline 
			5  & 3  & $\frac{X^{2}YZ}{(1-XYZ)(1-X)}$  & $p^{d+2}T^{d}$  & $p^{-1}T^{-2}$  & $p^{-1}$\tabularnewline
			\hline 
		\end{tabular}
	\end{table}

	Thus 
	\begin{align*}
	\sum_{[\mathbb{Z}_{p}^{3}]\neq[\Lambda']\in\partial S_{F}}p^{dw[\Lambda']}T^{w'[\Lambda']} & =\sum_{j=0}^{5}\sum_{(a,\,b,\,c)\in N_{j}}\left|\phi^{-1}(a,\,b,\,c)\right|\psi(a,\,b,\,c)\\
	& =\left.\sum_{j=0}^{5}(1-p^{-1})^{n_{j}-1}F_{j}(X,\,Y,\,Z)\right|_{\substack{X=m_{jX}(p,\,T)\\
			Y=m_{jY}(p,\,T)\\
			Z=m_{jZ}(p,\,T)
		}
	}\\
	& =\frac{p^{d}T^{d-2}(1-p^{d}T^{d})}{(1-p^{d}T^{d-2})(1-p^{d+2}T^{d})}.
	\end{align*}
	The generating function counting over lattices of type $(p^{t},\,p^{t},\,1),$
	$T\geq1,$ is clearly the same as Case (1), i.e. is given by 
	\[
	\sum_{t\geq1}p^{2t-2}\cdot p^{2dt}T^{(d+2)t}=\frac{p^{2d}T^{d+2}}{1-p^{2d+2}T^{d+2}}.
	\]
	Finally, we count over $[\Lambda']\in S_{F}^{\circ},$ i.e. over lattices
	of type $(p^{s+t},\,p^{t},\,1),$ $s,\,t\geq1.$ This is easy, since
	the generating function equals $p$ times the product of the previous
	generating functions counting over the boundary. Thus we have 
	\[
	\sum_{[\Lambda']\in S_{F}^{\circ}}p^{dw[\Lambda']}T^{w'[\Lambda']}=p\cdot\frac{p^{d}T^{d-2}(1-p^{d}T^{d})}{(1-p^{d}T^{d-2})(1-p^{d+2}T^{d})}\cdot\frac{p^{2d}T^{d+2}}{1-p^{2d+2}T^{d+2}}
	\]
	Hence we get 
	\begin{align*}
	A_{\textrm{sm.pt/off}}(p,\,T)= & \frac{1}{\binom{3}{2}_{p}\binom{2}{1}_{p}}+\frac{1}{\binom{2}{1}_{p}}\left(\frac{p^{d}T^{d-2}(1-p^{d}T^{d})}{(1-p^{d}T^{d-2})(1-p^{d+2}T^{d})}+\frac{p^{2d}T^{d+2}}{1-p^{2d+2}T^{d+2}}\right)\\
	& +p\cdot\frac{p^{d}T^{d-2}(1-p^{d}T^{d})}{(1-p^{d}T^{d-2})(1-p^{d+2}T^{d})}\cdot\frac{p^{2d}T^{d+2}}{1-p^{2d+2}T^{d+2}}
	\end{align*}
	and 
	\begin{align*}
	A(p,\,T)= & \left(\binom{3}{2}_{p}-\left|V(\mathbb{F}_{p})\right|\right)\binom{2}{1}_{p}A_{\textrm{off/off}}(p,\,T)\\
	& +\left|V(\mathbb{F}_{p})\right|\binom{2}{1}_{p}A_{\textrm{sm.pt/off}}(p,\,T)\\
	= & A_{1}(p,\,T)+\left|V(\mathbb{F}_{p})\right|A_{2}(p,\,T),
	\end{align*}
	where 
	\begin{align*}
	A_{1}(p,\,T)= & \frac{1+p^{d}T^{d}+p^{d+1}T^{d}+p^{2d}T^{d+2}+p^{2d+1}T^{d+2}+p^{3d+1}T^{2d+2}}{\left(1-p^{2d+2}T^{d+2}\right)\left(1-p^{d+2}T^{d}\right)}\\
	A_{2}(p,\,T) & =\frac{(1-T)(1+T)p^{d}T^{d-2}(1+p^{2d+1}T^{d+2})}{(1-p^{d}T^{d-2})(1-p^{d+2}T^{d})(1-p^{2d+2}T^{d+2})}.
	\end{align*}
	As $d=5$ we have
	
	\begin{align*}
	\zeta_{G_{np8},\,p}^{\vartriangleleft}(s) & =\zeta_{L(G)_{p}}^{\vartriangleleft}(s)\\
	& =\zeta_{\mathbb{Z}_{p}^{5}}(s)\zeta_{p}(8s-15)A(p,\,p^{-s})\\
	& =\zeta_{\mathbb{Z}_{p}^{5}}(s)\zeta_{p}(8s-15)\zeta_{p}(7s-12)\zeta_{p}(5s-7)W_{1}(p,\,p^{-s})\\
	& +\left|V(\mathbb{F}_{p})\right|\zeta_{\mathbb{Z}_{p}^{5}}(s)\zeta_{p}(8s-15)\zeta_{p}(3s-5)\zeta_{p}(5s-7)\zeta_{p}(7s-12)W_{2}(p,\,p^{-s})
	\end{align*}
	where 
	\begin{align*}
	W_{1}(X,\,Y) & =1+X^{5}Y^{5}+X^{6}Y^{5}+X^{10}Y^{7}+X^{11}Y^{7}+X^{16}Y^{12}\\
	W_{2}(X,\,Y) & =(1-Y)(1+Y)X^{5}Y^{3}(1+X^{11}Y^{7}).
	\end{align*}

	From Corollary \ref{cor:noofroot}, 
	\[
	\left|V(\mathbb{F}_{p})\right|=\left|\{(1,\,k^{2},\,k)\}\right|=\begin{cases}
	3 & p\equiv\textrm{1 mod 3 and \ensuremath{p=a^{2}+27b^{2}},}\\
	0 & p\equiv\textrm{1 mod 3 and \ensuremath{p\neq a^{2}+27b^{2}},}\\
	1 & p\equiv\textrm{2 mod 3},
	\end{cases}
	\]
	for integers $a$ and $b$, and is not PORC. Thus  $\zeta_{G_{np8},\,p}^{\vartriangleleft}(s)$
	is Non-RFORC as required. \end{proof}
\begin{rem}
	Note that $\zeta_{G_{np8},\,p}^{\vartriangleleft}(s)$ is still finitely uniform. In fact, this group $G_{np8}$ can be seen as a $\Zp$ version of the finite $p$-group $G_p$ of order $p^8$ that has been introduced in Theorem \ref{thm:G_p} (\cite[Theorem 1.3]{Lee/2016}), which gave non-PORC number of immediate descendants of order $p^9$. This demonstrates that the PORC conjecture can be better studied by the RFORC behaviour of normal zeta functions.
\end{rem}

We can encode the same variety $V$ (in a different form)
into another $\mathfrak{T}_{2}$-group $G_{np8'}$ of Hirsch length
$8$ such that the behaviour of its normal local factor is also governed
by $\left|V(\mathbb{F}_{p})\right|.$ 
\begin{thm} \label{thm:gnp8'} Let $G_{np8'}$
	be a $\mathfrak{T}_{2}$-group whose associated Lie ring $L(G_{np8'})$
	has a presentation 
	\[
	L(G_{np8'})=\left\langle x_{1},\,x_{2},\,x_{3},\,x_{4},\,x_{5},\,x_{6},\,y_{1},\,y_{2}:[x_{i},\,x_{j}]=M(\boldsymbol{y})\right\rangle 
	\]
	where 
	\[
	M(\boldsymbol{y})=\left(\begin{array}{cccccc}
	0 & 0 & 0 & y_{1} & y_{2} & 0\\
	0 & 0 & 0 & 0 & y_{1} & y_{2}\\
	0 & 0 & 0 & 2y_{2} & 0 & y_{1}\\
	-y_{1} & 0 & -2y_{2} & 0 & 0 & 0\\
	-y_{2} & -y_{1} & 0 & 0 & 0 & 0\\
	0 & -y_{2} & -y_{1} & 0 & 0 & 0
	\end{array}\right).
	\]
	Then $\zeta_{G_{np8'},\,p}^{\vartriangleleft}(s)$ is Non-RFORC.\end{thm}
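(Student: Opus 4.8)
The plan is to put $G_{np8'}$ into the framework of Voll's method for $\mathfrak{T}_2$-groups with derived group of Hirsch length $d'=2$, exactly as in the proof of Theorem \ref{thm:gnp8}, but this time \emph{without} the parity obstruction, since here $d=h(G_{np8'}/G_{np8'}{}')=6$ is even. First I would record that
\[
M(\boldsymbol{y})=\begin{pmatrix}0 & B(\boldsymbol{y})\\ -B(\boldsymbol{y})^{t} & 0\end{pmatrix},\qquad B(\boldsymbol{y})=\begin{pmatrix}y_{1} & y_{2} & 0\\ 0 & y_{1} & y_{2}\\ 2y_{2} & 0 & y_{1}\end{pmatrix},
\]
so that $\det M(\boldsymbol{y})=\det(B(\boldsymbol{y}))^{2}$ and hence, up to sign, $\mathrm{Pf}(M(\boldsymbol{y}))=\det B(\boldsymbol{y})=y_{1}^{3}+2y_{2}^{3}$. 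This polynomial is nonzero and irreducible over $\mathbb{Q}$ (as $\sqrt[3]{-2}\notin\mathbb{Q}$), so the Pfaffian hypersurface $\mathcal{P}_{G_{np8'}}=(y_{1}^{3}+2y_{2}^{3}=0)$ is a well-defined degree-$3$ hypersurface in $\mathbb{P}^{1}$. Being a reduced $0$-dimensional subscheme of $\mathbb{P}^{1}$ it is automatically smooth and contains no lines, and $\det M(\boldsymbol{y})\not\equiv 0$ forces $Z(L(G_{np8'}))=L(G_{np8'}){}'$; so the hypotheses of Theorem \ref{thm:Voll.class2} (equivalently, of Voll's $d'=2$ theorem in \cite{Voll/04}, which in addition outputs explicit $W_1,W_2$) are met, with good reduction at every prime $p\nmid 2\cdot 3$ since $\mathrm{disc}(x^{3}+2)=-108$.

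Next I would invoke Theorem \ref{thm:Voll.class2}: there are rational functions $W_{1},W_{2}\in\mathbb{Q}(X,Y)$, with $W_{2}\not\equiv 0$, such that for all but finitely many primes $p$
\[
\zeta_{G_{np8'},\,p}^{\vartriangleleft}(s)=W_{1}(p,p^{-s})+n_{\mathcal{P}_{G_{np8'}}}(p)\,W_{2}(p,p^{-s}).
\]
The key point is that on $\mathcal{P}_{G_{np8'}}$ one has $y_{2}\neq 0$ (else $y_{1}=y_{2}=0$), so $(y_{1}:y_{2})\mapsto -y_{1}/y_{2}$ is a bijection from $\mathcal{P}_{G_{np8'}}(\mathbb{F}_{p})$ onto $\{x\in\mathbb{F}_{p}:x^{3}=2\}$; hence $n_{\mathcal{P}_{G_{np8'}}}(p)=n(p)$, the number of roots of $x^{3}-2$ modulo $p$, which by Corollary \ref{cor:noofroot} is not PORC. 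In other words $G_{np8'}$ encodes the same arithmetic as $G_{np8}$, but now through an honest Pfaffian of an even-sized matrix rather than through the degeneracy locus of an odd-sized one.

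Finally I would deduce non-RFORC exactly as for $G_{np8}$. Suppose $\zeta_{G_{np8'}}^{\vartriangleleft}(s)$ were RFORC with modulus $N$ and rational functions $W_{(1)},\dots,W_{(N)}$. Comparing with the displayed formula on the progression $p\equiv i\pmod N$ gives $n(p)\,W_{2}(p,p^{-s})=W_{(i)}(p,p^{-s})-W_{1}(p,p^{-s})$; extracting a coefficient of $p^{-ks}$ for which the $W_{2}$-contribution is not identically zero (possible since $W_{2}\not\equiv 0$), we find that $n(p)$ agrees, for all large $p$ in that progression, with a fixed rational function $R_{i}\in\mathbb{Q}(p)$. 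A rational function taking values in the finite set $\{0,1,3\}$ at infinitely many integers must be constant, so $n$ would be PORC with modulus $N$, contradicting Corollary \ref{cor:noofroot}. Hence $\zeta_{G_{np8'}}^{\vartriangleleft}(s)$ is non-RFORC; as in Theorem \ref{thm:gnp8} it nevertheless remains finitely uniform because $n(p)\in\{0,1,3\}$. The step I expect to be the main obstacle is verifying $W_{2}\not\equiv 0$ cleanly: formally one should either quote the explicit rational functions produced by Voll's algorithm for $d'=2$, $d=6$, or rerun the sector-family computation of the proof of Theorem \ref{thm:gnp8} in this case, which is in fact shorter here because the degeneracy locus of $M(\boldsymbol{y})$ genuinely \emph{is} the Pfaffian curve, so the ``$M(\overline{\alpha^{1}})$ maximal'' versus ``not maximal'' dichotomy is precisely Voll's and needs no ad hoc modification. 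Everything else (the Pfaffian computation, the identification $n_{\mathcal{P}_{G_{np8'}}}(p)=n(p)$, and the boundedness argument turning non-PORC of $n(p)$ into non-RFORC) is routine.
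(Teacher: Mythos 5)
Your proposal is correct and follows essentially the same route as the paper: identify $\det B(\boldsymbol{y})=y_{1}^{3}+2y_{2}^{3}$, invoke Voll's $d'=2$ machinery to write $\zeta_{G_{np8'},p}^{\vartriangleleft}(s)=W_{1}(p,p^{-s})+n(p)W_{2}(p,p^{-s})$ with $n(p)$ the number of roots of $x^{3}\mp2$ modulo $p$, and conclude non-RFORC from Corollary \ref{cor:noofroot}. The only difference is cosmetic: the paper settles your $W_{2}\not\equiv0$ concern exactly by your stated fallback, recognizing $L(G_{np8'})$ as an indecomposable $\mathfrak{D}^{*}$-group in the normal form of Theorem \ref{thm:D*} (with $a_{1}=a_{2}=0$, $a_{3}=-2$) and quoting the fully explicit formula of \cite[Proposition 3]{Voll/04}, in which the coefficient of $n_{f}(p)$ is visibly nonzero.
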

\begin{proof} $G_{np8'}$ is in fact an indecomposable
	$\mathfrak{D}^{*}$-group. The matrix $M(\boldsymbol{y})$ is in the from 
	\[
	\left(\begin{array}{cc}
	0 & B\\
	-B^{t} & 0
	\end{array}\right)
	\]
	in Theorem \ref{thm:D*} with $a_{1}=0,\,a_{2}=0,\,a_{3}=-2.$
	
	Now det$(B(\boldsymbol{y}))=g(y_{1},\,y_{2})=y_{1}^{3}+2y_{2}^{3}$
	and $g(y_{1},\,1)=y_{1}^{3}+2=f$ where $f$ is irreducible over $\mathbb{Q}$. Hence \cite[Proposition 3]{Voll/04} gives
	\[
	\zeta_{G,\,p}^{\vartriangleleft}(s)=\zeta_{\mathbb{Z}_{p}^{6}}(s)\zeta_{p}(8s-12)\zeta_{p}(5s-7)\zeta_{p}(7s-7)\zeta_{p}(5s-6)(P_{1}(p,p^{-s})+n_{f}(p)(P_{2}(p,p^{-s}))
	\]
	where 
	\begin{align*}
	P_{1}(X,\,Y) & =(1-X^{7}Y^{5})(1+X^{6}T^{7})(1-X^{6}Y^{5})\\
	P_{2}(X,\,Y) & =(1-Y)(1+Y)X^{6}Y^{5}(1-X^{7}Y^{5})
	\end{align*}
	and $n_{f}(p)$ is the number of distinct linear factors in $\overline{f(t)}=\overline{(y_{1}^{3}+2)}.$
	Again, 
	\[
	n_{f}(p)=\begin{cases}
	3 & p\equiv\textrm{1 mod 3 and \ensuremath{p=a^{2}+27b^{2},}}\\
	0 & p\equiv\textrm{1 mod 3 and \ensuremath{p\neq a^{2}+27b^{2,}}}\\
	1 & p\equiv\textrm{2 mod 3},
	\end{cases}
	\]
	for integers $a$ and $b$. Hence $\zeta_{G_{np8'},\,p}^{\vartriangleleft}(s)$
	is non-RFORC. \end{proof}
\begin{acknowledgements}
	This article comprises parts of the author's Ph.D thesis \cite{Lee/19thesis} from the University of Oxford. The author gratefully acknowledge inspiring mathematical discussions with Marcus du Sautoy, Roger Heath-Brown, Benjamin Klopsch, Dan Segal and Christopher Voll about
	the research presented in this paper.
\end{acknowledgements}
\bibliographystyle{amsplain}
\bibliography{Lee_RFORC_22OCT20_arXiv}
\end{document}